\theoremstyle{plain}
\newtheorem{thm}{Theorem}[section]
\newtheorem{lemma}[thm]{Lemma}
\theoremstyle{definition}
\newtheorem{defn}[thm]{Definition}
\theoremstyle{remark}
\newtheorem*{rmk}{Remark}
\newtheorem*{rmks}{Remarks}
\xpatchcmd{\proof}{\@addpunct{.}}{\@addpunct{:}}{}{}
\let\@@pmod\pmod
\DeclareRobustCommand{\pmod}{\@ifstar\@pmods\@@pmod}
\def\@pmods#1{\mkern4mu({\operator@font mod}\mkern 6mu#1)}
\newcommand{\C}{\mathbb{C}}
\renewcommand{\H}{\mathbb{H}}
\newcommand{\Z}{\mathbb{Z}}
\newcommand{\Q}{\mathbb{Q}}
\newcommand{\N}{\mathbb{N}}
\newcommand{\R}{\mathbb{R}}
\newcommand{\slz}{{\text {\rm SL}}_2(\mathbb{Z})}
\newcommand{\bsfrac}[2]{\reflectbox{\nicefrac[\reflectbox]{$#1$}{$#2$}}}
\DeclareMathOperator{\sgn}{sgn}
\DeclareMathOperator{\tr}{tr}
\renewcommand{\Re}{\mathrm{Re}}
\renewcommand{\Im}{\mathrm{Im}}
\newcommand{\vt}[1]{\left\lvert #1 \right\rvert}
\newcommand{\Qc}{\mathcal{Q}}
\newcommand{\Ec}{\mathcal{E}}
\newcommand{\Id}{\mathbbm{1}}
\title[Eisenstein series of even weight $k\geq 2$]{Eisenstein series of even weight $k \geq 2$ and integral binary quadratic forms}
\author{Andreas Mono}
\address{Department of Mathematics and Computer Science, Division of Mathematics, University of Cologne, Weyertal 86-90, 50931 Cologne, Germany}
\email{amono@math.uni-koeln.de}
\date{} 
\begin{document}

\subjclass[2010]{Primary 11E45, Secondary 11E16, 11F12, 11F30}

\begin{abstract}
We prove a conjecture of Matsusaka on the analytic continuation of hyperbolic Eisenstein series in weight $2$ on the full modular group $\slz$.
\end{abstract}

\maketitle

\small
\textbf{\keywordsname :} \textit{Eisenstein series, Integral binary quadratic forms, Analytic continuation}

\normalsize

\section{Introduction and statement of results}
Integral binary quadractic forms play a decisive role in the construction of many modular objects, mainly to investigate various classes of theta functions. However, they can also be utilized to construct another prominent class of modular objects, namely families of Eisenstein series. We will define Eisenstein series associated to some $\gamma \in \slz\setminus\{\pm\Id\}$, and call them elliptic, parabolic or hyperbolic respectively corresponding to the motion $\gamma$ induces on the upper half plane $\H$. Although such constructions go back to Petersson \cite{pet44} essentially, and the analytic continuation of the classical parabolic Eisenstein series was established by Selberg \cite{sel56} and Roelcke \cite{roe67} some years later, similar results remained elusive in the other two cases.

A first breakthrough was made in weight $0$ some years ago, which completes the picture regarding analytic continuation to $s=1$, and was established by Jorgenson, Kramer, von Pippich, Schwagenscheidt, V{\"o}lz, compare \cite[Theorem 4.2]{jokrvp10}, \cite[Section 4]{pi16}, \cite[Theorem 1.2]{pischvoe17}, \cite[Appendix B]{matsu2}. Hence, it seems natural to ask whether similar results hold in weight $2$ or higher, overleaping the ``point of symmetry'' $k=1$. In a recent paper \cite{matsu2}, Matsusaka investigated parabolic, elliptic, and hyperbolic Eisenstein series in weight $2$. In a second breakthrough, Bringmann, Kane \cite{brika} provided the analytic continuation of Petersson's weight $2$ elliptic Poincar\'{e} series to $s=0$, which enabled Matsusaka \cite[Theorem 2.3]{matsu2} to extend this result to the weight $2$ elliptic Eisenstein series. 

Consequently, we focus on the case of hyperbolic Eisenstein series. If $k = 2$, Schwagenscheidt \cite[Remark 5.4.6]{schw18} argued towards existence of the analytic continuation to $s=0$, and Matsusaka \cite[eq.\ (2.12)]{matsu2} conjectured its shape. We extend Matsusaka's setting to general even weight $k \geq 2$, and embed his Eisenstein series into a framework based on discriminants of integral binary quadratic forms. This enables us to prove Matsusaka's conjecture for any positive non-square discriminant in weight $2$ by computing the Fourier expansion of our hyperbolic Eisenstein series. To this end, we adapt Zagier's method \cite[Section 2]{zagier75}, and appeal to results of Duke, Imamo\={g}lu, T\'{o}th \cite{duimto11}. 

We introduce all involved objects and terminology during sections \ref{sec:pre} to \ref*{sec:peeis} in detail.
\begin{thm} \label{thm:main}
Let $\gamma \in \slz$ be hyperbolic and primitive. Then the function $\Ec_{2,\gamma}(\tau,s)$ can be analytically continued to $s=0$ and the continuation is given by
\begin{align*}
\lim_{s \to 0} \Ec_{2,\gamma}(\tau,s) =& \frac{-2}{\Delta(\gamma)^{\frac{1}{2}}} \sum_{m \geq 0} \sum_{Q \in \Qc\left(\Delta(\gamma)\right)_{\sim}} \chi_d(Q) \int_{\bsfrac{S_Q}{\Gamma_Q}} j_m(w) \frac{\vt{dw}}{\Im{(w)}} \ q^m \\
& - \frac{-2}{\Delta(\gamma)^{\frac{1}{2}}} \tr_{d,\Delta(\gamma)}(1) E_2^*(\tau)
\end{align*}
for any $\tau \in \H$. Here, $\tr_{d,\Delta(\gamma)}(1)$ is a twisted trace of cycle integrals given by
\begin{align*}
\tr_{d,\Delta(\gamma)}(1) \coloneqq \sum_{Q \in \Qc\left(\Delta(\gamma)\right)_{\sim}} \chi_d(Q) \int_{\bsfrac{S_Q}{\Gamma_Q}} \frac{\vt{dw}}{\Im{(w)}}.
\end{align*}
Furthermore, if $\Im(\tau)$ is sufficiently large, that is $\tau$ is located above the net of geodesics $\bigcup_{Q \in \Qc(\Delta(\gamma))} S_Q$, then we have
\begin{align*}
\lim_{s \to 0} \Ec_{2,\gamma}(\tau,s) = \frac{-2}{\Delta(\gamma)^{\frac{1}{2}}} \sum_{Q \in \Qc\left(\Delta(\gamma)\right)_{\sim}} \chi_d(Q) \int_{\bsfrac{S_Q}{\Gamma_Q}} \left(\frac{D(j)(\tau)}{j(w)-j(\tau)} - E_2^*(\tau)\right)\frac{\vt{dw}}{\Im{(w)}}.
\end{align*}
\end{thm}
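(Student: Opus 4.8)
\emph{Proof proposal for the final display.}

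The plan is to obtain the closed form by feeding the classical generating series of the $j_m$ into the Fourier expansion furnished by the first part of the theorem. The key input is the Asai--Kaneko--Ninomiya identity
\[
\sum_{m \geq 0} j_m(w)\, q^m = \frac{D(j)(\tau)}{j(w) - j(\tau)},
\]
where $q = e^{2\pi i \tau}$ and the right-hand side is meromorphic in $\tau$ with poles exactly along the $\slz$-orbit of $w$. For fixed $w$ the series converges precisely when $\Im(\tau)$ exceeds the height of the top of the orbit $\slz w$, and this threshold, rather than merely $\Im(\tau) > \Im(w)$, is what governs the radius of convergence. I would first record this identity together with its sharp domain of convergence.

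The heart of the argument is the interchange of the summation over $m$ with the finite sum over $Q \in \Qc(\Delta(\gamma))_{\sim}$ and the cycle integrals over $\bsfrac{S_Q}{\Gamma_Q}$. In the first part of the theorem the Fourier series converges on all of $\H$, because by the results of Duke, Imamo\={g}lu, T\'{o}th the twisted traces $\int_{\bsfrac{S_Q}{\Gamma_Q}} j_m(w)\frac{\vt{dw}}{\Im(w)}$ are, up to normalisation, Fourier coefficients of a form of half-integral weight and hence grow at most polynomially in $m$. The closed form, however, requires convergence of the integrand's series $\sum_m j_m(w) q^m$ \emph{before} integration, and this is exactly where the hypothesis enters. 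Since $\Delta(\gamma)$ is a positive non-square discriminant, every $S_Q$ is a semicircle of height $\sqrt{\Delta(\gamma)}/(2|a|) \le \sqrt{\Delta(\gamma)}/2$ (with $a$ the leading coefficient of $Q$), so the $\slz$-invariant net $\bigcup_{Q \in \Qc(\Delta(\gamma))} S_Q$ has finite height $\sqrt{\Delta(\gamma)}/2$; being above it means $\Im(\tau) > \sqrt{\Delta(\gamma)}/2$, which dominates the orbit-top height of every point $w$ on every compact arc $\bsfrac{S_Q}{\Gamma_Q}$. This yields absolute and uniform convergence of the Asai--Kaneko--Ninomiya series along the integration paths and legitimises the interchange; simultaneously, $\tau$ lying off the net guarantees $\tau \not\sim w$ and hence $j(\tau) \neq j(w)$, so no pole of the integrand is met.

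It remains to absorb the Eisenstein correction. As $E_2^*(\tau)$ is independent of $w$, the definition of the twisted trace gives
\[
-\frac{-2}{\Delta(\gamma)^{\frac{1}{2}}} \tr_{d,\Delta(\gamma)}(1)\, E_2^*(\tau) = \frac{-2}{\Delta(\gamma)^{\frac{1}{2}}} \sum_{Q} \chi_d(Q) \int_{\bsfrac{S_Q}{\Gamma_Q}} \bigl(-E_2^*(\tau)\bigr) \frac{\vt{dw}}{\Im(w)},
\]
the sum running over $\Qc(\Delta(\gamma))_{\sim}$. Adding this to the interchanged first term merges the two contributions into the single cycle integral of $\frac{D(j)(\tau)}{j(w)-j(\tau)} - E_2^*(\tau)$, which is the assertion.

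I expect the genuinely hard work to lie upstream, in the first part of the theorem: adapting Zagier's method to extract the $q^m$-coefficients of the analytic continuation and to identify them with the twisted traces, where the non-modularity of the weight-$2$ Eisenstein series is what forces the compensating $E_2^*$ term. For the present statement in isolation, the main obstacle is the convergence bookkeeping of the preceding paragraph: one must verify that ``above the net'' is the correct and $\slz$-invariant condition, dominating the orbit-top heights that control the Asai--Kaneko--Ninomiya radius of convergence, rather than only the heights of the chosen fundamental arcs.
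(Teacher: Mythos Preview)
Your derivation of the final display from the first part of the theorem is correct and is exactly the route the paper takes, though the paper leaves this step implicit: after computing the Fourier coefficients and recognising them as the twisted traces of $j_m(w)+24\sigma_1(m)$ (for $m\ge 1$) together with the $\frac{3}{\pi v}$ constant term, the paper simply declares ``This proves Theorem~\ref{thm:main}'', relying on the Asai--Kaneko--Ninomiya identity already quoted in the elliptic section. Your convergence bookkeeping --- that the non-square discriminant forces $a\neq 0$, that the net therefore has finite height $\sqrt{\Delta(\gamma)}/2$, and that $\Gamma$-invariance of the net makes ``above the net'' dominate all orbit-top heights --- is more explicit than anything the paper writes down, and it is the right justification for the interchange of sum and cycle integral.
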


\begin{rmks}
\begin{enumerate}
\item The function $\Ec_{2,\gamma}(\tau,s)$ is a twisted trace of individual hyperbolic Eisenstein series, which we denote by $E_{2,\gamma}(\tau,s)$. 
\item We will indicate below that the analytic continuation of the weight $2$ parabolic / elliptic Eisenstein series to $s=0$ is a harmonic / polar harmonic Maa{\ss} form. Such forms generalize the notion of classical holomorphic modular forms by relaxing analytical and growth conditions to a non-holomorphic setting. Theorem \ref{thm:main} completes the picture in the sense that the resulting cycle integral is a locally harmonic Maa{\ss} form of weight $2$ in $\tau$ with $\Im(\tau)$ sufficiently large. These objects were introduced by Bringmann, Kane, Kohnen in \cite{brikako} (see also \cite[Section 13.4]{thebook}), and independently by Hövel \cite{hoevel} in his PhD. thesis. Roughly speaking, such a form is a harmonic Maa{\ss} form that is permitted to have singularities on the net of geodesics $\bigcup_{Q \in \Qc(D)} S_Q$. The singularities occur due to the presence of a sign-function in most cases, and are called ``jumping singularities''.
\end{enumerate}
\end{rmks}

As a byproduct of our approach, we obtain the expansion of $\Ec_{k,\gamma}(\tau,0)$ for every even weight $k \geq 4$. This was known by Parson \cite[Theorem 3.1]{parson} without the twisting. In particular, if $k \geq 4$ satisfies $k \equiv 0 \pmod*{4}$, the hyperbolic Eisenstein series $E_{k,\gamma}(\tau,0)$ is a holomorphic cusp form. In this case, the Fourier expansion of the twisted traces of hyperbolic Eisenstein series of weight $4 \mid k > 2$ was already established by Gross, Kohnen, Zagier \cite[p.\ 517]{grokoza}.
\begin{thm} \label{thm:var}
Let $\gamma \in \slz$ be hyperbolic and primitive, and suppose $k \geq 4$ is even. Moreover, let $G_m(\tau,s)$ be the Niebur Poincar\'{e} series studied by Duke, Imamo\={g}lu, T\'{o}th in \cite{duimto11}. Then $\Ec_{k,\gamma}(\tau, 0)$ equals
\begin{align*}
\frac{(-1)^{\frac{k}{2}}2 \pi^{\frac{k}{2}}}{\Delta(\gamma)^{\frac{k}{4}}\Gamma\left(\frac{k}{4}\right)^2} \sum_{m \geq 1} m^{\frac{k}{2}-1} \sum_{Q \in \Qc\left(\Delta(\gamma)\right)_{\sim}} \chi_d(Q) \int_{\bsfrac{S_Q}{\Gamma_Q}} G_{-m}\left(w,\frac{k}{2}\right) \frac{\vt{dw}}{\Im{(w)}} \ q^m.
\end{align*}
\end{thm}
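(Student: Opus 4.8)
The plan is to compute the Fourier expansion of $\Ec_{k,\gamma}(\tau,s)$ for general even weight and then to specialise at $s=0$. Since $\Ec_{k,\gamma}(\tau,s)$ is the $\chi_d$-twisted trace $\sum_{Q \in \Qc(\Delta(\gamma))_{\sim}} \chi_d(Q)\,E_{k,Q}(\tau,s)$ of the individual hyperbolic Eisenstein series $E_{k,Q}(\tau,s)$ attached to the geodesics $S_Q$, and since the Poincar\'{e}-type series defining $E_{k,Q}(\tau,s)$ converges absolutely and locally uniformly at $s=0$ as soon as $k \geq 4$, the decisive simplification is that \emph{no analytic continuation is needed}: one may set $s=0$ directly inside the series. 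This is exactly where the weight $k \geq 4$ case decouples from the delicate weight $2$ analysis behind Theorem~\ref{thm:main}, so that Theorem~\ref{thm:var} drops out of the same computation.

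First I would fix a single form $Q$ and compute the Fourier expansion of $E_{k,Q}(\tau,0)$ in $\tau = u+iv$ by adapting Zagier's unfolding argument \cite[Section 2]{zagier75}. Writing $E_{k,Q}(\tau,s)$ as a sum over $\Gamma_Q \backslash \slz$ and reorganising the cosets against the cusp stabiliser $\Gamma_\infty$ through the double-coset decomposition $\Gamma_Q \backslash \slz / \Gamma_\infty$, I would use that an infinite cyclic hyperbolic group intersects a parabolic one trivially in order to unfold the period integral $\int_0^1(\cdots)\,e^{-2\pi i m u}\,du$ (equivalently, to unfold the cycle integral over $S_Q/\Gamma_Q$ against the $\Gamma_\infty$-sum defining the Niebur series). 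The resulting real-variable integral is an exponential integral against the weight-$k$ automorphy kernel and evaluates in terms of $I$- and $K$-Bessel functions, so that the $m$-th coefficient takes the shape $C_{k,m}(Q)\,q^m$ with $C_{k,m}(Q)$ an explicit Bessel-type quantity multiplied by a geodesic integral over $S_Q$.

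Next I would identify these coefficients with cycle integrals of the Niebur Poincar\'{e} series. Comparing the explicit Fourier expansion of $G_{-m}(w,s)$ recorded by Duke, Imamo\={g}lu, T\'{o}th \cite{duimto11} at the spectral value $s=\tfrac{k}{2}$ — admissible because $\tfrac{k}{2} \geq 2$ lies in the range of convergence — I would match the Bessel data from the previous step and recognise $C_{k,m}(Q)$ as $\tfrac{(-1)^{k/2}2\pi^{k/2}}{\Delta(\gamma)^{k/4}\Gamma(k/4)^2}\,m^{k/2-1}\int_{\bsfrac{S_Q}{\Gamma_Q}} G_{-m}(w,\tfrac{k}{2})\,\tfrac{\vt{dw}}{\Im(w)}$. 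Summing over the forms $Q$ with weights $\chi_d(Q)$, which is a finite sum carrying no analytic subtlety, then assembles the twisted trace. Two structural points require separate verification: the constant term ($m=0$) must vanish, and the quasimodular correction by $E_2^*(\tau)$ appearing in Theorem~\ref{thm:main} must be absent. Both hold because for $k \geq 4$ the series already represents a genuine holomorphic weight-$k$ form (a cusp form when $4 \mid k$), leaving neither a non-holomorphic Eisenstein contribution nor a nonzero zeroth coefficient.

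I expect the main obstacle to be the precise bookkeeping of constants together with the rigorous justification of interchanging summation, cycle integration, and Fourier expansion. Although absolute convergence for $k \geq 4$ renders every interchange legitimate, producing exactly the normalisation $\tfrac{(-1)^{k/2}2\pi^{k/2}}{\Delta(\gamma)^{k/4}\Gamma(k/4)^2}$ and the shift to the spectral parameter $\tfrac{k}{2}$ in $G_{-m}(w,\tfrac{k}{2})$ demands careful control of the Gamma-factors coming from the $s$-normalisation of the hyperbolic Eisenstein series, from the Bessel integrals, and from Duke--Imamo\={g}lu--T\'{o}th's normalisation of the Niebur series. The squared factor $\Gamma(k/4)^2$ is, in particular, the fingerprint of a Legendre/hypergeometric special value at $s=0$ that has to be evaluated without error.
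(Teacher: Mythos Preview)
Your proposal is correct and follows essentially the same route as the paper: Zagier's double-coset unfolding of the hyperbolic series, evaluation of the resulting line integral as a $J$-Bessel function via an inverse Laplace transform (the paper's Lemma~\ref{lem:laplace}), and identification of the outcome with cycle integrals of the Niebur series through the Duke--Imamo\={g}lu--T\'oth identity (the paper's Lemma~\ref{lem:duimto}); the vanishing of the $m\le 0$ coefficients and the absence of any $E_2^*$ correction fall out exactly as you anticipate.

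The only organisational difference is that you work form-by-form and sum the $\chi_d$-twist at the end, whereas the paper first passes to the twisted trace so that the class-averaged Weyl sums coalesce into the Sali\'e sums $T_m(d,d',4a)$, and then invokes \cite[Proposition~4]{duimto11} as a ready-made identity between $\sum_a T_m(d,d',4a)a^{-1/2}J_{\rho-1/2}(\cdots)$ and the traced cycle integral of $G_{-m}$. Two small corrections to your narrative: the real-variable integral produces a $J$-Bessel function, not $I$- or $K$-Bessel (the $I$-Bessel only enters through the \emph{definition} of $\phi_m$ inside $G_{-m}$); and the factor $\Gamma(k/4)^2$ is not a Legendre-type special value at $s=0$ but simply the $\Gamma(\rho/2)^2$ in the normalisation of the Duke--Imamo\={g}lu--T\'oth formula at $\rho=k/2$.
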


We devote Section \ref{sec:hypeis} to the development of both theorems. 

\subsection*{Acknowledgements:} 
The author would like to thank his PhD-advisor Kathrin Bringmann for suggesting Matsusaka's paper as well as for her continuous helpful feedback to the work on it. Furthermore, the author would like to thank the anonymous refree for many valuable and significant comments on an earlier version, which improved the paper vigorously.

\section{Preliminaries} \label{sec:pre}
Let us summarize some general framework first, more details can be found for example in \cite[Chapter 2]{iwaniec97} regarding hyperbolic geometry, and in \cite[§ 8]{zagier81} regarding integral binary quadratic forms.

\subsection{Fractional linear transformations}
Let $\gamma = \left(\begin{smallmatrix} a & b \\ c & d \end{smallmatrix}\right) \in \slz \eqqcolon \Gamma$. The group $\Gamma$ acts on $\H \cup \R \cup \{i\infty\}$ by Möbius transformations. We stipulate $\tau = u+iv\in \H$ throughout, and write $j(\gamma, \tau) \coloneqq (c\tau+d)$ for the usual modular multiplier. 

\subsubsection{Classification of motions}
We summarize some standard facts.
\begin{enumerate}[label=(\alph*)]
\item An element $\gamma \in \Gamma\setminus\{\pm\Id\}$ is called parabolic if $\vt{\tr{(\gamma)}} = 2$. We have a unique fixed point $\mathfrak{a}_{\gamma}$ of $\gamma$, called a cusp, and located in $\Q \cup \{i\infty\}$. The stabilizer of each cusp is conjugate to the stabilizer of $i\infty$, which is generated by $T \coloneqq \left(\begin{smallmatrix} 1 & 1 \\ 0 & 1 \end{smallmatrix}\right)$ up to sign. In other words $\gamma = \pm \sigma_{\mathfrak{a}_{\gamma}} T^n \sigma_{\mathfrak{a}_{\gamma}}^{-1}$ for some $n \in \Z\setminus\{0\}$, where $\sigma_{\mathfrak{a}_{\gamma}}$ is a scaling matrix of the cusp, namely it satisfies $\sigma_{\mathfrak{a}_{\gamma}} \infty = \mathfrak{a}_{\gamma}$. Points are moved by $\gamma$ along horocycles, that are circles in $\H$ tangent to $\R$.
\item An element $\gamma \in \Gamma$ is called elliptic if $\vt{\tr{(\gamma)}} < 2$. Recall that any elliptic fixed point $w_{\gamma}$ is $\Gamma$-equivalent to either $i$ or $\omega \coloneqq \mathrm{e}^{\frac{\pi i}{3}}$. Letting $S \coloneqq \left(\begin{smallmatrix} 0 & -1 \\ 1 & 0 \end{smallmatrix}\right)$, $U \coloneqq TS = \left(\begin{smallmatrix} 1 & -1 \\ 1 & 0 \end{smallmatrix}\right)$, we see that $\Gamma_i = \left\{\Id, S, S^2, S^3\right\}$, $\Gamma_{\omega} = \left\{\Id, U, \ldots, U^5\right\}$. Points are moved by $\gamma$ along circles centered at $w_\gamma$.
\item An element $\gamma \in \Gamma$ is called hyperbolic if $\vt{\tr{(\gamma)}} > 2$. Recall that $\gamma$ has precisely two different fixed points $w_{\gamma}$, $w'_{\gamma}$, located on the real axis. Writing $\Gamma_{w_\gamma, w'_{\gamma}} = \pm \langle \eta_{w_\gamma, w'_{\gamma}} \rangle$, there exists a scaling matrix $\sigma_{w_\gamma, w'_{\gamma}} \in \mathrm{SL}_2(\R)$ such that $\sigma_{w_\gamma, w'_{\gamma}}0 = w_{\gamma}$, $\sigma_{w_\gamma, w'_{\gamma}}\infty = w'_{\gamma}$, and $\sigma_{w_\gamma, w'_{\gamma}}^{-1} \eta_{w_\gamma, w'_{\gamma}} \sigma_{w_\gamma, w'_{\gamma}} = \pm \left(\begin{smallmatrix} y & 0 \\ 0 & y^{-1} \end{smallmatrix}\right)$ for some $y \in \R_{>0}$. Points are moved by $\gamma$ along hypercycles, that are lines and circle arcs intersecting $\R$ at non-perpendicular angles.
\end{enumerate}

\subsection{Integral binary quadratic forms}
Let $Q$ be an integral binary quadratic form, and the terminology ``quadratic form'' abbreviates such forms throughout. The group $\Gamma$ acts on the set of quadratic forms by defining $\left(Q \circ \left(\begin{smallmatrix} a & b \\ c & d \end{smallmatrix}\right)\right)(x,y)$ as $Q(ax+by, cx+dy)$, and this induces an equivalence relation, which we denote by $\sim$. Moreover, the actions of $\Gamma$ on $\H$ and on quadratic forms are compatible, in the sense that $\left(Q \circ \gamma\right)(\tau,1)$ equals $j(\gamma,\tau)^2 Q(\gamma\tau,1)$. Sometimes, we abbreviate $\left[a,b,c\right] \coloneqq ax^2+bxy+cy^2$, and we denote its discriminant $b^2-4ac$ by $\Delta([a,b,c])$. One can check that the discriminant is invariant under $\sim$. For every $D \in \Z$, we let $\Qc(D) \coloneqq \left\{Q \colon \Delta(Q) = D\right\}$ be the set of all quadratic forms with discriminant $D$. If $D \neq 0$ the set $\Qc(D)_{\sim} \coloneqq \Qc(D) \slash \Gamma$ is finite, whose cardinality is called the class number $h(D)$. If $D \equiv 0 \pmod*{4}$ or $D \equiv 1 \pmod*{4}$, then $\Qc(D)_{\sim}$ is non-empty.

\subsubsection{Heegner geodesics}
Let $0 \neq Q$ be a quadratic form. If $\Delta(Q) > 0$ then we associate to $Q$ the Heegner geodesic $S_Q \coloneqq \{\tau \in \H \ \colon a\vt{\tau}^2+b\Re{(\tau)}+c=0 \}$, that is $S_Q$ is an arc in $\H$ perpendicular to the real axis joining the two distinct zeros of $Q(\tau,1)$. If $a=0$ then the second point is given by $-\frac{c}{b}$.

\subsubsection{Quadratic forms associated to $\gamma \in \Gamma$}
In addition, we define $Q_{\gamma}(x,y)$ to be the quadratic form $cx^2+(d-a)xy-by^2$ associated to $\gamma = \left(\begin{smallmatrix} a & b \\ c & d \end{smallmatrix}\right) \in \Gamma$. We set $\Delta(\gamma) \coloneqq \Delta\left(Q_{\gamma}\right) = \tr{(\gamma)}^2 - 4$, and observe that the sign of $\Delta(\gamma)$ depends precisely on hyperbolicity, parabolicity, or ellipticity of $\gamma$ respectively. Futhermore, we note that $Q_{-\gamma}(x,y) = Q_{\gamma^{-1}}(x,y) = -Q_{\gamma}(x,y)$. Hence, we invoke a sign-function on quadratic forms. Namely, we define 
\begin{align*}
\sgn{\left([a,b,c]\right)} \coloneqq \begin{cases}
\sgn(a) & \text{ if } a \neq 0, \\
\sgn(c) & \text{ if } a = 0.
\end{cases}
\end{align*}
This will cause a difference in the case of positive discriminant only.
\begin{lemma}
Suppose $\Delta(Q) \leq 0$. Then $Q \sim -Q$ implies $Q = 0$.
\end{lemma}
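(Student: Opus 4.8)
The plan is to argue via definiteness of the real quadratic form attached to $Q$, exploiting that the $\Gamma$-action on forms preserves the set of values represented over $\Z^2$. So I would suppose $\Delta(Q) \leq 0$ and $Q \neq 0$ and derive a contradiction with $Q \sim -Q$; the asserted implication then follows by contraposition (note that since $Q = 0$ forces $\Delta(Q) = 0$, in the strictly negative case this amounts to showing that no nonzero form is equivalent to its negative).

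First I would record that a nonzero form $Q = [a,b,c]$ with $\Delta(Q) \leq 0$ is semidefinite, i.e.\ it represents values of a single weak sign. When $a \neq 0$, completing the square gives
\[
4a\,Q(x,y) = (2ax+by)^2 - \Delta(Q)\,y^2,
\]
and since $-\Delta(Q) \geq 0$ the right-hand side is a sum of two squares, hence nonnegative; thus $a\,Q(x,y) \geq 0$ for all real $(x,y)$, so $Q$ has constant weak sign $\sgn(a)$. When $a = 0$ the hypothesis $\Delta(Q) = b^2 \leq 0$ forces $b = 0$, so $Q = [0,0,c] = cy^2$, again of constant weak sign $\sgn(c)$. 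In either case $\sgn(Q)$, as defined just above, is exactly the sign of the values represented by $Q$. Moreover $Q \neq 0$ guarantees that $Q$ represents a nonzero integer: otherwise $Q(1,0) = a$, $Q(0,1) = c$, and $Q(1,1) = a+b+c$ would all vanish, forcing $a = b = c = 0$.

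Next I would use that each $\gamma \in \Gamma$ acts as a bijection of $\Z^2$, so the value set $\{Q(v) : v \in \Z^2\}$ is unchanged upon replacing $Q$ by $Q \circ \gamma$. Hence $Q \sim -Q$ would give
\[
\{Q(v) : v \in \Z^2\} = \{-Q(v) : v \in \Z^2\},
\]
so this set is symmetric under $t \mapsto -t$. But by the previous paragraph it is contained in $[0,\infty)$ or in $(-\infty,0]$ and contains a nonzero element, which is incompatible with such symmetry. This contradiction forces $Q = 0$.

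The only place demanding care is the boundary case $\Delta(Q) = 0$, where $Q$ need not be anisotropic: it may vanish along a rational line, so one cannot conclude merely from the absence of nontrivial zeros. The value-set formulation sidesteps this, since it relies only on the represented values keeping a single weak sign together with the existence of one nonzero represented value, both of which persist in the degenerate case; thus the argument covers $\Delta(Q) < 0$ and $\Delta(Q) = 0$ uniformly.
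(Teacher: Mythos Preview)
Your proof is correct. The paper itself states this lemma without proof, so there is nothing to compare against directly; your argument via semidefiniteness and preservation of the represented value set under $\sim$ is exactly the natural one, and handles the boundary case $\Delta(Q)=0$ cleanly.
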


\subsubsection{Genus characters}
This subsection follows the introduction given by Gross, Kohnen, Zagier in \cite[p.\ 508]{grokoza}. Let $Q = [a,b,c]$ be a quadratic form. We observe that $\sim$ preserves $\gcd{(a,b,c)}$ as well. We would like to define a $\Gamma$-invariant function on $\Qc(D)$ (assume $D \equiv 0 \pmod*{4}$ or $D \equiv 1 \pmod*{4}$). If $D \neq 0$, let $d$ be a fundamental discriminant dividing $D$, and let $\big(\frac{d}{\cdot}\big)$ be the Kronecker symbol. In addition, an integer $n$ is represented by $Q$ if there exist $x$, $y \in \Z$, such that $Q(x,y) = n$. This established, we define
\begin{align*}
\chi_d\left([a,b,c]\right) &\coloneqq \begin{cases}
\left(\frac{d}{n}\right) & \text{ if } \gcd{(a,b,c,d)} = 1, [a,b,c] \text{ represents } n, \gcd{(d,n)} = 1, \\
0 & \text{ if } \gcd{(a,b,c,d)} > 1.
\end{cases}
\end{align*}
One can verify that such an integer $n$ always exists, and the definition is independent from its choice. Since equivalent quadratic forms represent the same integers, this function is indeed invariant under $\sim$. The choice $d=1$ yields the trivial character. The definition of $\chi_d\left([a,b,c]\right)$ extends to $D=0$ by choosing $d=0$ in this case, compare the proof of Lemma \ref{lem:twistedconv}. Additional properties of $\chi_d$ are summarized in \cite[Proposition 1 and 2]{grokoza}.

\section{Construction of Eisenstein series}

\subsection{Eisenstein series associated to a quadratic form}

This construction is based on the following two observations, and follows \cite{matsu2}.
\begin{lemma}
Let $\gamma \in \Gamma\setminus\{\pm\Id\}$, and $Q_{\gamma}$ be the associated quadratic form to $\gamma$.
\begin{enumerate}[label=(\roman*)]
\item The zeros of $Q_{\gamma}(\tau,1)$ are precisely the fixed points of $\gamma$ in $\H \cup \R$.
\item The equivalence class of $Q_{\gamma}(\tau,1)$ is precisely the set $\left\{Q_{\alpha^{-1}\gamma \alpha}(\tau,1) \colon \alpha \in \Gamma\right\}$.
\end{enumerate}
\end{lemma}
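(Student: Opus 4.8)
For part (i), the plan is a direct computation. A point $\tau \in \H\cup\R$ is fixed by $\gamma = \left(\begin{smallmatrix} a&b\\c&d\end{smallmatrix}\right)$ exactly when $\frac{a\tau+b}{c\tau+d} = \tau$, which upon clearing denominators reads $c\tau^2 + (d-a)\tau - b = 0$. Since $Q_\gamma(\tau,1) = c\tau^2+(d-a)\tau-b$ by definition, the fixed-point equation and the vanishing of $Q_\gamma(\cdot,1)$ are literally the same condition, so the finite fixed points are precisely the zeros. I would then record the only degenerate case: the leading coefficient $c$ vanishes only when $a=d=\pm1$, forcing $\vt{\tr(\gamma)}=2$, i.e.\ $\gamma$ parabolic fixing $i\infty$ (or $\gamma=\pm\Id$, which is excluded); there both the zero set in $\H\cup\R$ and the fixed-point set in $\H\cup\R$ are empty, so the statement still holds. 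This case needs only a remark and causes no trouble.

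For part (ii), the entire content is the single equivariance identity
\[ Q_\gamma \circ \alpha = Q_{\alpha^{-1}\gamma\alpha}, \qquad \alpha\in\Gamma, \]
because once it holds the equivalence class $\{Q_\gamma\circ\alpha : \alpha\in\Gamma\}$ of $Q_\gamma$ under $\sim$ is, term by term, the set $\{Q_{\alpha^{-1}\gamma\alpha} : \alpha\in\Gamma\}$. To prove the identity cleanly I would pass through the standard dictionary between binary quadratic forms and traceless $2\times2$ matrices. Writing $M_Q = \left(\begin{smallmatrix} A & B/2 \\ B/2 & C\end{smallmatrix}\right)$ for $Q = [A,B,C]$ and $J = \left(\begin{smallmatrix} 0 & 1 \\ -1 & 0\end{smallmatrix}\right)$, the assignment $Q \mapsto JM_Q$ is a linear bijection onto traceless matrices. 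Two facts then suffice. First, equivariance: $JM_{Q\circ\alpha} = \alpha^{-1}(JM_Q)\alpha$, which follows from $M_{Q\circ\alpha} = \alpha^{\mathsf{T}} M_Q\alpha$ together with $\alpha J\alpha^{\mathsf{T}} = J$ (valid since $\det\alpha=1$), giving $J\alpha^{\mathsf{T}} = \alpha^{-1}J$. Second, a direct check that $JM_{Q_\gamma} = -\bigl(\gamma - \tfrac12\tr(\gamma)\Id\bigr)$, the negative of the traceless part of $\gamma$. Granting these, the traceless part of $\alpha^{-1}\gamma\alpha$ is $\alpha^{-1}\bigl(\gamma - \tfrac12\tr(\gamma)\Id\bigr)\alpha$ since the trace is conjugation-invariant, and applying equivariance and the inverse of the dictionary yields $Q_{\alpha^{-1}\gamma\alpha} = Q_\gamma\circ\alpha$.

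An alternative I would keep in reserve avoids the dictionary and leans on part (i): both $Q_\gamma\circ\alpha$ and $Q_{\alpha^{-1}\gamma\alpha}$ have the same discriminant $\tr(\gamma)^2-4$, and by (i) combined with the compatibility $(Q\circ\alpha)(\tau,1) = j(\alpha,\tau)^2 Q(\alpha\tau,1)$ they share the zero set $\alpha^{-1}\{\text{fixed points of }\gamma\}$; two integral forms with equal discriminant and equal roots agree up to sign, and comparing leading coefficients pins down the sign. I expect the main obstacle to be nothing deep but rather bookkeeping: fixing the sign conventions in the form-matrix correspondence so that $Q_\gamma$ lands on the correct traceless representative, and handling the vanishing-leading-coefficient degeneracy in (i). For this reason the matrix-dictionary route is the one I would write up in full.
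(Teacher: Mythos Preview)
Your proof is correct. The paper itself states this lemma without proof, treating both items as elementary observations, so there is no argument to compare against. Your direct computation for (i) is exactly what is intended, and the equivariance identity $Q_\gamma\circ\alpha = Q_{\alpha^{-1}\gamma\alpha}$ you isolate is the entire content of (ii); your verification via the traceless-matrix dictionary, using $JM_{Q_\gamma} = -\bigl(\gamma-\tfrac12\tr(\gamma)\Id\bigr)$ together with $J\alpha^{\mathsf T}=\alpha^{-1}J$ for $\alpha\in\slz$, goes through as written. One minor addition that rounds out the case analysis in (i): if $c\neq 0$ and $\tau\in\R$ satisfies $c\tau+d=0$ (so $\gamma\tau=i\infty$ and $\tau$ is not a fixed point), a direct substitution gives $Q_\gamma(\tau,1)=(ad-bc)/c=1/c\neq 0$, so clearing the denominator introduces no spurious zero.
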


We observe that division by $Q_{\gamma}(\cdot,1)$ and averaging over equivalence classes of $Q_{\gamma}(\cdot,1)$ modulo its zeros provides a function of weight $2$. Consequently, we define the following functions.
\begin{defn}
Let $\gamma \in \Gamma\setminus\{\pm\Id\}$, $k \in 2\N$, $\tau \in \H$, $\Re{(s)} > 1 -\frac{k}{2}$, and $w(\gamma)$ be the set of fixed points of $\gamma$. Then we define
\begin{align*}
E_{k, Q}(\tau,s) &\coloneqq \sum_{Q' \sim Q} \frac{\sgn{(Q')}^{\frac{k}{2}} v^s}{Q'(\tau,1)^{\frac{k}{2}} \vt{Q'(\tau,1)}^s}, \\
E_{k, \gamma}(\tau,s) &\coloneqq E_{k, Q_{\gamma}}(\tau,s) = \sum_{\alpha \in \bsfrac{\Gamma}{\Gamma_{w(\gamma)}}} \frac{\sgn{(Q_{\alpha^{-1}\gamma \alpha})}^{\frac{k}{2}} v^s}{Q_{\alpha^{-1}\gamma \alpha}(\tau,1)^{\frac{k}{2}} \vt{Q_{\alpha^{-1}\gamma \alpha}(\tau,1)}^s}.
\end{align*}
\end{defn}
We establish convergence.
\begin{lemma}
For every $k \in 2\N$ the series defining $E_{k, Q}(\tau,s)$ converges absolutely and locally uniformly for $\tau \in \H$ and $\Re{(s)} > 1-\frac{k}{2}$.
\end{lemma}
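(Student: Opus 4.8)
The plan is to reduce the claim to the absolute convergence of the majorant obtained by replacing each summand with its modulus. Since $\vt{\sgn(Q')^{k/2}} = 1$ and $\vt{v^s} = v^{\sigma}$ with $\sigma \coloneqq \Re(s)$, it suffices to show that the series
\begin{align*}
\sum_{Q' \sim Q} \frac{v^{\sigma}}{\vt{Q'(\tau,1)}^{\frac{k}{2}+\sigma}}
\end{align*}
converges locally uniformly, where the exponent $t \coloneqq \frac{k}{2}+\sigma$ satisfies $t > 1$ precisely under the hypothesis $\sigma > 1-\frac{k}{2}$. Thus the weight $k$ and the parameter $s$ enter only through the combination $t$, and the problem becomes independent of $k$.

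First I would record the elementary identity
\begin{align*}
\vt{Q'(\tau,1)}^2 = P_{Q'}(\tau)^2 + \Delta(Q)\, v^2, \qquad P_{Q'}(\tau) \coloneqq A\vt{\tau}^2 + Bu + C,
\end{align*}
valid for any $Q' = [A,B,C]$, which follows by expanding the real and imaginary parts of $A\tau^2+B\tau+C$ and using $\Delta(Q') = \Delta(Q)$ on the class. Two consequences are immediate: the quantity $F_{Q'}(\tau) \coloneqq \vt{Q'(\tau,1)}/v$ is bounded below by $\Delta(Q)^{1/2}$ when $\Delta(Q) > 0$, and, comparing with the hyperbolic distance $d(\tau, S_{Q'})$ to the Heegner geodesic, one obtains $F_{Q'}(\tau) = \Delta(Q)^{1/2}\cosh\left(d(\tau,S_{Q'})\right)$. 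This identifies the majorant, up to the harmless factor $v^{-k/2}$, with a weight $0$ hyperbolic Eisenstein series evaluated at the exponent $t$.

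Next I would pass to the group side. Parametrising the class as $\{Q\circ\alpha : \alpha \in \bsfrac{\Gamma}{\Gamma_Q}\}$ and using the compatibility $(Q\circ\alpha)(\tau,1) = j(\alpha,\tau)^2 Q(\alpha\tau,1)$ together with $\Im(\alpha\tau) = v\vt{j(\alpha,\tau)}^{-2}$, one checks that $\vt{Q'(\tau,1)} = v\,F_Q(\alpha\tau)$ for $Q' = Q\circ\alpha$, so that $F_Q$ is constant on $\Gamma_Q$-cosets and the majorant collapses to $v^{-k/2}\sum_{\alpha \in \Gamma/\Gamma_Q} F_Q(\alpha\tau)^{-t}$. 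The crux is then the convergence of this orbital sum for $t > 1$. I would establish it by a lattice point estimate: for $\tau$ in a fixed compact set $K \subset \H$, the number of $Q' \sim Q$ with $\vt{Q'(\tau,1)} \le X$ is $O_K(X\log X)$, uniformly in $\tau \in K$. This follows from the identity above, which turns the constraint into $\vt{P_{Q'}(\tau)} \le (X^2 - \Delta(Q)v^2)^{1/2}$, together with the observation that $(A,\, 2Au+B,\, P_{Q'}(\tau))$ is a unimodular real-linear change of coordinates of $(A,B,C)$; hence the forms of discriminant $\Delta(Q)$ meeting this slab are controlled by the number of integral points on the quadric $B^2-4AC = \Delta(Q)$ inside a region of thickness $O_K(X)$. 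A dyadic decomposition then gives $\sum_{Q'\sim Q}\vt{Q'(\tau,1)}^{-t} \ll_K \sum_{n\ge 0} n\, 2^{n(1-t)} < \infty$ for $t > 1$, with all implied constants locally uniform; replacing $\sigma$ by $\sigma - \varepsilon$ yields local uniformity in $s$ on $\Re(s) \ge 1 - \frac{k}{2} + \varepsilon$.

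The main obstacle is precisely this counting step and its uniformity: because $\vt{Q'(\tau,1)}$ stays bounded below by a constant while the coefficients of $Q'$ grow, no crude coefficientwise lower bound suffices, and one must exploit the geometry, equivalently the linear-in-$X$ growth of the number of geodesics in the orbit within bounded hyperbolic distance of $\tau$, to land at the sharp threshold $t=1$. Alternatively, once $F_{Q'} = \Delta(Q)^{1/2}\cosh(d(\cdot, S_{Q'}))$ is in hand one may simply invoke the known convergence of the weight $0$ hyperbolic Eisenstein series for real part exceeding $1$; the cases $\Delta(Q) \le 0$ are treated by the same identity, with the distance replaced by its parabolic resp.\ elliptic analogue, the elliptic case being understood on compact subsets avoiding the $\Gamma$-orbit of the elliptic fixed point, where $Q'(\tau,1)$ may vanish.
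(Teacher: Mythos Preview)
Your argument is correct. The paper's own proof, however, consists of a single sentence citing Petersson's 1948 paper on the domain of absolute convergence of Poincar\'e series, so your route is genuinely different in that it is self-contained rather than an appeal to the literature.

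What you do differently is make the reduction to weight $0$ explicit: after passing to the majorant you identify $\vt{Q'(\tau,1)}/v$ with $\Delta(Q)^{1/2}\cosh\bigl(d(\tau,S_{Q'})\bigr)$, so that the problem becomes convergence of the weight $0$ hyperbolic Eisenstein series at exponent $t=\tfrac{k}{2}+\sigma>1$, which you then settle by a lattice-point count on the discriminant quadric. This is exactly the content that Petersson's theorems encode, so conceptually the two proofs agree; yours simply unpacks the mechanism. The payoff of your version is that it exposes why the threshold is $\Re(s)=1-\tfrac{k}{2}$ uniformly in $k$ (the weight and $s$ enter only through $t$) and ties the convergence directly to the hyperbolic geometry already used elsewhere in the paper, whereas the citation keeps the argument short.

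One remark on the counting step: your justification via the unimodular change $(A,B,C)\mapsto(A,2Au+B,P_{Q'}(\tau))$ is a real (not integral) transformation, so ``integral points on the quadric inside a region of thickness $O_K(X)$'' needs a word more. It suffices to note that $\vt{Q'(\tau,1)}\le X$ forces $\vt{A}\ll_K X$ and $\vt{2Au+B}\ll_K X$, and that for each such pair $(A,B)$ with $B^2\equiv\Delta(Q)\pmod{4A}$ the third coefficient $C$ is determined; bounding the number of admissible residues $B\pmod{2A}$ by $O(A^{\varepsilon})$ yields the $O_K(X^{1+\varepsilon})$ (or $O_K(X\log X)$) you claim. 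Alternatively, as you observe, once the identification with $\cosh\bigl(d(\tau,S_{Q'})\bigr)$ is in hand one may simply quote the known convergence of the weight $0$ hyperbolic series, which is in the same spirit as the paper's citation.
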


\begin{proof}
This follows by results of Petersson \cite[Satz 1, Satz 4, Satz 6]{pet48}.
\end{proof}

However, $E_{k, \gamma}$ is not modular yet, because the sign-function is not invariant under equivalence of quadratic forms. The circumvention of this obstruction depends on the motion $\gamma$ induces.

\subsection{Eisenstein series associated to a given discriminant}
Let $D \equiv 0 \pmod*{4}$ or $D \equiv 1 \pmod*{4}$. If $D \neq 0$, we let $d$ be the positive fundamental discriminant dividing $D$, else we set $d=0$. We average over $\Qc(D)$. Henceforth, we twist the average by a genus character, and split the sum into equivalence classes (recall that such a character descends to $\Qc(D)_{\sim}$).
\begin{defn}
Let $\gamma \in \Gamma\setminus\{\pm\Id\}$, $k \in 2\N$, $\tau \in \H$, $\Re{(s)} > 1 -\frac{k}{2}$. Then we define
\begin{align*}
\Ec_{k,D}(\tau, s) &\coloneqq \sum_{0 \neq Q \in \Qc(D)_{\sim}} \chi_d\left(Q\right) E_{k,Q}(\tau,s), \\
\Ec_{k,\gamma} (\tau,s) &\coloneqq \Ec_{k,\Delta(\gamma)}(\tau, s) = \sum_{0 \neq Q \in \Qc\left(\Delta(\gamma)\right)_{\sim}} \chi_d\left(Q\right) E_{k,Q}(\tau,s)
\end{align*}
\end{defn}

We establish convergence.
\begin{lemma} \label{lem:twistedconv}
For every $k \in 2\N$, $\tau \in \H$, and $\Re{(s)} > 1-\frac{k}{2}$ the series defining $\Ec_{k, D}(\tau,s)$ converges absolutely and locally uniformly.
\end{lemma}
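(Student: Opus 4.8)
The plan is to reduce the twisted average to a \emph{finite} linear combination of the individual series $E_{k,Q}(\tau,s)$ and then invoke the preceding convergence lemma for $E_{k,Q}(\tau,s)$. Since the genus character $\chi_d$ takes values in $\{-1,0,1\}$, any finite combination of the $E_{k,Q}(\tau,s)$ with coefficients $\chi_d(Q)$ inherits absolute and locally uniform convergence term by term, so the entire task is to exhibit the index set effectively contributing to $\Ec_{k,D}(\tau,s)$ as finite.

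First I would dispose of the case $D \neq 0$. Here every form of discriminant $D$ is automatically nonzero, so the constraint $0 \neq Q$ is vacuous, and the index set $\Qc(D)_{\sim} = \Qc(D)/\Gamma$ is finite of cardinality $h(D)$. Thus $\Ec_{k,D}(\tau,s)$ is literally a finite sum of the series $E_{k,Q}(\tau,s)$ weighted by the bounded factors $\chi_d(Q)$, and the claim is immediate from the convergence lemma for $E_{k,Q}(\tau,s)$ on the range $\Re(s) > 1-\tfrac{k}{2}$.

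The case $D = 0$ is the only one requiring a genuine argument, and it simultaneously justifies the choice $d = 0$ in the definition of the genus character. Although $\Qc(0)_{\sim}$ is now infinite, I would show that $\chi_0$ is supported on just two classes. By definition $\chi_0$ vanishes on every imprimitive form, because $\gcd(a,b,c,0) = \gcd(a,b,c)$; it therefore suffices to classify the \emph{primitive} forms of discriminant $0$ up to $\Gamma$-equivalence. Each such form factors as $\pm(px+qy)^2$ with $\gcd(p,q)=1$, and under $Q \mapsto Q \circ \gamma$ the coefficient row vector transforms as $(p,q) \mapsto (p,q)\gamma$. Since $\Gamma$ acts transitively on primitive row vectors, every primitive form of discriminant $0$ is equivalent to $[1,0,0]$ or $[-1,0,0]$, and these two are inequivalent by the lemma asserting that $\Delta(Q) \leq 0$ and $Q \sim -Q$ force $Q = 0$. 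Both classes represent $\pm 1$, whence $\chi_0 = \left(\frac{0}{\pm 1}\right) = 1$. Discarding the vanishing imprimitive terms leaves $\Ec_{k,0}(\tau,s) = E_{k,[1,0,0]}(\tau,s) + E_{k,[-1,0,0]}(\tau,s)$, again a finite sum of convergent series.

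The main — and essentially the only — obstacle is recognizing in the $D=0$ case that the infinite outer sum collapses onto these two primitive classes; once this is in place, the remaining ingredients (transitivity of $\Gamma$ on primitive vectors and the evaluation $\left(\frac{0}{\pm 1}\right)=1$) are routine, and convergence of each surviving summand is inherited verbatim from the preceding lemma.
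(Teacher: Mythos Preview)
Your proof is correct and follows essentially the same approach as the paper: for $D \neq 0$ finiteness of the class number gives a finite sum, and for $D = 0$ one shows that $\chi_0$ is supported on the two primitive classes $[\pm 1,0,0]$, reducing again to a finite sum of the already-convergent $E_{k,Q}(\tau,s)$. Your treatment of the $D=0$ case is in fact slightly more careful than the paper's, as you explicitly handle both signs via the factorization $\pm(px+qy)^2$ and the transitivity of $\Gamma$ on primitive vectors.
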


\begin{proof}
If $D = 0$, then $\chi_0(Q) = 0$ except $Q$ is primitive, and represents $\pm 1$. Thus, we reduce to the quadratic forms $\left[c^2,2cd,d^2\right]$ for any coprime pair $(c,d) \in \Z^2$. But such a quadratic form is equivalent to either $[-1,0,0]$ or $[1,0,0]$. If $D \neq 0$ the class number $h(D)$ is finite. This proves the claim.
\end{proof}

\section{Parabolic and elliptic Eisenstein series} \label{sec:peeis}

\subsection{Parabolic case}
It sufficed to study the case $\gamma = T^n$. Let $\alpha = \left(\begin{smallmatrix} * & * \\ c & d \end{smallmatrix}\right) \in \bsfrac{\Gamma}{\Gamma_{\infty}}$. We compute
\begin{align*}
\alpha^{-1}T^n \alpha = \left(\begin{array}{ccc} 1+cdn & d^2n \\ -c^2n & 1-cdn \end{array}\right), \qquad Q_{\alpha^{-1}T^n\alpha}(\tau,1) = -nj(\gamma,\tau)^2.
\end{align*}
Hence, using $\sgn(n) = \frac{n}{\vt{n}}$, we recover the usual real analytic Eisenstein series
\begin{align*}
E_{k, \gamma}(\tau,s) = E_{k, T^n}(\tau,s) = \frac{1}{\vt{n}^{s+\frac{k}{2}}} \left( \sum_{\alpha \in \bsfrac{\Gamma}{\Gamma_{\infty}}} \frac{\Im{(\alpha\tau)}^s}{j(\alpha,\tau)^k}\right).
\end{align*}
We infer
\begin{align*}
\Ec_{k,\gamma}(\tau,s) = 2\sum_{\alpha \in \bsfrac{\Gamma}{\Gamma_{\infty}}} \frac{\Im{(\alpha\tau)}^s}{j(\alpha,\tau)^k}
\end{align*}
for any parabolic motion $\gamma$.

\subsubsection{Modularity}
Clearly $\Ec_{k,\gamma}$ is modular of weight $k$ according to the ``chain rule'' property of the modular multiplier, that is $j(\alpha\beta,\tau) = j(\alpha,\beta\tau)j(\beta,\tau)$ for every $\alpha,\beta \in \Gamma$ and every $\tau \in \H$.

\subsubsection{Analytic continuation}
It is a classical fact that $\Ec_{k,\gamma}$ can be continued meromorphically to the whole $s$-plane, see \cite[p.\ 76-79]{sel56}, \cite[p.\ 293]{roe67}. Be aware of the fact that Roelcke uses the automorphy factor $\big(\frac{j(\gamma,\tau)}{\vt{j(\gamma, \tau)}}\big)^{-k}$,
whence his initial domain of convergence is $\Re{(s)} > 1$ for every $k \in \R$. 

If $k > 2$ we may simply insert $s=0$, and obtain the classical holomorphic modular Eisenstein series
\begin{align*}
\Ec_{k,\gamma}(\tau,0) = \sum_{\mathrm{gcd}(c,d)=1} \frac{1}{\left(c\tau+d\right)^k}.
\end{align*}

If $k=2$ we utilize Hecke's trick (cf.\ Zagier \cite[p.\ 19-20]{the123}), or alternatively the Fourier expansion of $E_{2, T^{\pm 1}}(\tau,s)$ (cf.\ Iwaniec \cite[p.\ 51]{iwaniec97}), to achieve
\begin{align*}
\lim_{s \searrow 0} \Ec_{k,\gamma}(\tau,s) = 2 E_2^*(\tau) \coloneqq 2\left(E_2(\tau) - \frac{3}{\pi v}\right)\coloneqq 2\left(1 - 24\sum_{n \geq 1} \sum_{d \mid n} d \ q^n - \frac{3}{\pi v}\right).
\end{align*}
This is the holomorphic Eisenstein series of weight $2$ completed by $-\frac{3}{\pi v}$, and thus a harmonic Maa{\ss} form of weight $2$. A good exposition on the theory as well as on the applications of harmonic Maa{\ss} forms can be found in \cite{thebook}.

\subsection{Elliptic case}
Recall that any elliptic motion is conjugate to either $S$ or $U$, so it suffices to deal with those two cases, up to a change of sign and class numbers. Those cases correspond to discriminants $-4$ and $-3$ respectively, and both class numbers are equal to $1$. Reduced primitive representatives are $[1,0,1]$, $[1,1,1]$, and the genus character of both forms equals $1$. Hence, it suffices to investigate\footnote{Note that the two cases do not cover the more general case of $E_{k,Q}(\tau,s)$ with $\Delta(Q) < 0$. However, one may reuse the function $E_k(\tau,z,s)$ to deal with this case.} $E_{k,S}$ and $E_{k,U}$. To this end, we define the following function.
\begin{defn}
Let $\tau, z \in \H$ be $\Gamma$-inequivalent to each other, and $\Re{(s)} > 1-\frac{k}{2}$, $k \in 2\N$. Then we define
\begin{align*}
E_k\left(\tau, z, s\right) \coloneqq \sum_{\alpha \in \Gamma} \frac{\Im{(z)^{s+\frac{k}{2}}\Im{(\alpha\tau)}^s}}{j(\alpha,\tau)^k \left(\alpha\tau-z\right)^{\frac{k}{2}}\left(\alpha\tau-\overline{z}\right)^{\frac{k}{2}}\vt{\left(\alpha\tau-z\right)\left(\alpha\tau-\overline{z}\right)}^s}.
\end{align*}
\end{defn}

\subsubsection{Modularity}
The function $E_k$ enjoys the following properties.
\begin{lemma}
\begin{enumerate}[label=(\roman*)]
\item If $\gamma = S$ or $\gamma = U$, then $\sgn{(Q_{\alpha^{-1}\gamma \alpha})} = 1$ for any $\alpha \in \Gamma$, and 
\begin{align*}
E_{k,\gamma}(\tau,s) = \frac{\Im{(w_{\gamma})}^{-s-\frac{k}{2}}}{\vt{\Gamma_{w_\gamma}}} E_k\left(\tau,w_\gamma,s\right).
\end{align*}
in both cases.
\item For any $\rho \in \Gamma$ we have
\begin{align*}
E_k\left(\rho\tau,z,s\right) = j(\rho,\tau)^k E_k\left(\tau,z,s\right), \quad E_k\left(\tau,\rho z,s\right) = E_k\left(\tau,z,s\right).
\end{align*}
\end{enumerate}
\end{lemma}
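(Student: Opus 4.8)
The plan is to verify both parts by direct manipulation of the defining series, reducing everything to two standard cocycle identities for $\slz$: the chain rule $j(\alpha\beta,\tau) = j(\alpha,\beta\tau)j(\beta,\tau)$, and the Möbius difference formula
\[
\rho w_1 - \rho w_2 = \frac{w_1 - w_2}{j(\rho,w_1)\,j(\rho,w_2)}, \qquad \rho \in \slz,
\]
together with $\Im(\rho z) = \Im(z)/\vt{j(\rho,z)}^2$ and the reality relations $\overline{\rho z} = \rho\overline{z}$, $j(\rho,\overline z) = \overline{j(\rho,z)}$. Throughout I use that $k$ is even, so every exponent $\tfrac{k}{2}$ is an integer and the powers $(\,\cdot\,)^{k/2}$ carry no branch ambiguity. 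I would prove (ii) first, since (i) relies on rewriting $E_{k,\gamma}$ in the shape of $E_k$. For the weight transformation in the first variable I substitute $\beta = \alpha\rho$ in the series for $E_k(\rho\tau,z,s)$; as $\alpha$ runs through $\Gamma$ so does $\beta$, and $\alpha\rho\tau = \beta\tau$. The chain rule gives $j(\alpha,\rho\tau) = j(\beta,\tau)/j(\rho,\tau)$, so the factor $j(\alpha,\rho\tau)^{-k}$ produces exactly $j(\rho,\tau)^k$ times the $\beta$-summand, and summing over $\beta$ yields $j(\rho,\tau)^k E_k(\tau,z,s)$.

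The invariance in the second variable is the only genuinely computational step. Here I substitute $\alpha = \rho\beta$ in the series for $E_k(\tau,\rho z,s)$ and expand each factor: the Möbius difference formula gives $\alpha\tau - \rho z = (\beta\tau - z)/\bigl(j(\rho,\beta\tau)j(\rho,z)\bigr)$ and, via $\overline{\rho z} = \rho\overline z$, $\alpha\tau - \overline{\rho z} = (\beta\tau - \overline z)/\bigl(j(\rho,\beta\tau)\overline{j(\rho,z)}\bigr)$; simultaneously $\Im(\rho z) = \Im(z)/\vt{j(\rho,z)}^2$, $\Im(\alpha\tau) = \Im(\beta\tau)/\vt{j(\rho,\beta\tau)}^2$, and $j(\alpha,\tau) = j(\rho,\beta\tau)j(\beta,\tau)$. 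The bookkeeping is that in each summand the holomorphic powers of $j(\rho,\beta\tau)$ cancel completely, since $k - \tfrac{k}{2} - \tfrac{k}{2} = 0$, the remaining $\vt{j(\rho,\beta\tau)}^{-2s}$ coming from $\Im(\alpha\tau)^s$ matches the one produced by the absolute-value factor, and the $j(\rho,z)$ powers combine through $j(\rho,z)^{-k/2}\overline{j(\rho,z)}^{-k/2} = \vt{j(\rho,z)}^{-k}$ to cancel against $\Im(\rho z)^{s+k/2}$. After these cancellations the summand is precisely the $\beta$-summand of $E_k(\tau,z,s)$, proving $E_k(\tau,\rho z,s) = E_k(\tau,z,s)$.

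For part (i), I first record that $Q_S = [1,0,1]$ and $Q_U = [1,-1,1]$ are positive definite, of discriminants $-4$ and $-3$. Since the $\Gamma$-action $Q\mapsto Q\circ\alpha$ is a change of variables by an element of $\slz$, it preserves positive definiteness; hence every $Q_{\alpha^{-1}\gamma\alpha} = Q_\gamma\circ\alpha$ is positive definite, its leading coefficient is positive, and $\sgn(Q_{\alpha^{-1}\gamma\alpha}) = 1$. With the sign trivialised I factor $Q_\gamma(\tau,1) = (\tau - w_\gamma)(\tau - \overline{w_\gamma})$ (leading coefficient $1$) and use the compatibility $Q_{\alpha^{-1}\gamma\alpha}(\tau,1) = j(\alpha,\tau)^2 Q_\gamma(\alpha\tau,1) = j(\alpha,\tau)^2(\alpha\tau - w_\gamma)(\alpha\tau - \overline{w_\gamma})$. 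Substituting this into $E_{k,\gamma}$ and using $v^s/\vt{j(\alpha,\tau)}^{2s} = \Im(\alpha\tau)^s$ turns each summand into $\Im(w_\gamma)^{-s-k/2}$ times the corresponding $E_k(\tau,w_\gamma,s)$-summand. The final point is the index set: $E_{k,\gamma}$ sums over $\bsfrac{\Gamma}{\Gamma_{w(\gamma)}}$ while $E_k$ sums over all of $\Gamma$. Since the summand depends on $\alpha$ only through $Q_{\alpha^{-1}\gamma\alpha} = Q_\gamma\circ\alpha$, it is constant on the cosets of $\mathrm{Stab}(Q_\gamma) = \Gamma_{w_\gamma}$, so the full sum over $\Gamma$ equals $\vt{\Gamma_{w_\gamma}}$ times the coset sum; collecting the constant $\Im(w_\gamma)^{-s-k/2}$ then gives $E_{k,\gamma}(\tau,s) = \vt{\Gamma_{w_\gamma}}^{-1}\Im(w_\gamma)^{-s-k/2}E_k(\tau,w_\gamma,s)$.

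I expect the $z$-invariance in (ii) to be the main obstacle, not conceptually but in the care required to track the holomorphic, anti-holomorphic, and absolute-value powers of $j(\rho,\beta\tau)$ and $j(\rho,z)$ and confirm that they cancel exactly. The subtle point in (i) is the coset-versus-full-group counting and the identification $\mathrm{Stab}(Q_\gamma) = \Gamma_{w_\gamma}$, which is what supplies the factor $\vt{\Gamma_{w_\gamma}}$.
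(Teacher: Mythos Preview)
Your proof is correct and follows essentially the same approach as the paper's: both establish (ii) via the substitution $\alpha=\rho\beta$ and the cocycle identities for $j$, and both derive (i) from the factorization $Q_{\alpha^{-1}\gamma\alpha}(\tau,1)=j(\alpha,\tau)^2(\alpha\tau-w_\gamma)(\alpha\tau-\overline{w_\gamma})$ together with the coset-versus-full-group count. The only cosmetic difference is that the paper verifies $\sgn(Q_{\alpha^{-1}\gamma\alpha})=1$ by explicitly computing the leading coefficients $a^2+c^2$ and $a^2+c^2-ac$, whereas you argue more conceptually via preservation of positive definiteness under $\slz$-equivalence.
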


\begin{proof}
Both items can be checked by computation, and we provide the main steps.
\begin{enumerate}[label=(\roman*)]
\item Suppose $\gamma = S,U$ and $w_{\gamma} = i,\omega$. Letting $\alpha = \left(\begin{smallmatrix} a & b \\ c & d \end{smallmatrix}\right) \in \Gamma $, we compute
\begin{align*}
Q_{\alpha^{-1}S\alpha}(\tau,1) &= \left(a^2+c^2\right)\tau^2 + 2\left(ab+cd\right)\tau + b^2+d^2, \\
Q_{\alpha^{-1}U\alpha}(\tau,1) &= \left(a^2+c^2-ac\right)\tau^2 + \left(2ab+2cd-ad-bc\right)\tau + b^2+d^2-bd,
\end{align*}
which implies the first claim, and additionally
\begin{align*}
j(\alpha,\tau)^2 \left(\alpha\tau-w_{\gamma}\right)\left(\alpha\tau-\overline{w_\gamma}\right) = Q_{\alpha^{-1}\gamma \alpha}(\tau,1)
\end{align*}
in both cases. The second claim follows directly.
\item One checks the following two identities. For every $\alpha,\beta \in \Gamma$ and every $\tau,z \in \H$ we have
\begin{align*}
j(\alpha,\tau) j(\alpha,\overline{\tau}) = \vt{j(\alpha,\tau)}^2, \qquad j(\beta,z)(\tau-\beta z) = j\left(\beta^{-1},\tau\right)\left(\beta^{-1}\tau-z\right).
\end{align*}
Modularity in $\tau$ follows directly by the ``chain rule'' property of the modular multiplier. To show modularity in $z$ we substitute $\alpha = \rho\beta$, and see that
\begin{align*}
\left(\vt{j(\rho,z)}^{2s+k} \vt{j(\rho, \beta\tau)}^{2s} j(\rho \beta,\tau)^{k} \frac{j(\rho^{-1},\rho\beta\tau)^k}{j(\rho, z)^{\frac{k}{2}}j(\rho, \overline{z})^{\frac{k}{2}}} \vt{\frac{j(\rho^{-1},\rho\beta\tau)^2}{j(\rho, z) j(\rho, \overline{z})}}^s\right)^{-1}
\end{align*}
equals $j(\beta,\tau)^{-k}$. This proves the second item. \qedhere
\end{enumerate}
\end{proof}

\subsubsection{Analytic continuation in weight $2$}
To describe the analytic continuation we recall Petersson's Poincar\'{e} series.
\begin{defn}
Let $\tau, z \in \H$ be $\Gamma$-inequivalent to each other, and $\Re{(s)} > 1-\frac{k}{2}$, $k \in 2\N$. Then we define
\begin{align*}
P_k\left(\tau, z, s\right) \coloneqq \sum_{\alpha \in \Gamma} \frac{\Im{(z)}^{s+\frac{k}{2}}}{j(\alpha,\tau)^k \vt{j(\alpha,\tau)}^{2s} \left(\alpha\tau-z\right)^{\frac{k}{2}} \left(\alpha\tau-\overline{z}\right)^{\frac{k}{2}} \vt{\alpha\tau-\overline{z}}^{2s}}.
\end{align*}
\end{defn}

The series $P_k$ enjoys the following transformation properties.
\begin{lemma}
Let $\rho \in \Gamma$. Then
\begin{align*}
\Im{(\rho\tau)}^s P_k\left(\rho\tau, z, s\right) = j(\rho,\tau)^k v^s P_k\left(\tau, z, s\right), \quad P_k\left(\tau, \rho z, s\right) = P_k\left(\tau, z, s\right).
\end{align*}
\end{lemma}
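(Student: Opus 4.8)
The plan is to mirror the proof of the preceding lemma for $E_k$, since $P_k$ is built from the same ingredients. Both transformation laws should follow by reindexing the sum over $\Gamma$ and repeatedly invoking the cocycle relation $j(\alpha\beta,\tau) = j(\alpha,\beta\tau) j(\beta,\tau)$, together with the two real-coefficient identities already recorded in the $E_k$ computation, namely $j(\rho,\tau) j(\rho,\overline{\tau}) = \vt{j(\rho,\tau)}^2$ and $\overline{j(\rho,\overline{z})} = j(\rho,z)$, plus the standard $\Im(\rho\tau) = v \vt{j(\rho,\tau)}^{-2}$.

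For modularity in $\tau$, I would start from $P_k(\rho\tau, z, s)$ and substitute $\beta = \alpha\rho$ in the summation, which is a bijection of $\Gamma$. Then $\alpha\rho\tau = \beta\tau$, while the cocycle relation gives $j(\alpha, \rho\tau) = j(\beta,\tau)/j(\rho,\tau)$, so the factors $j(\alpha,\rho\tau)^k$ and $\vt{j(\alpha,\rho\tau)}^{2s}$ each contribute a constant $j(\rho,\tau)^k$ resp.\ $\vt{j(\rho,\tau)}^{2s}$ that pulls out of the sum. This yields $P_k(\rho\tau,z,s) = j(\rho,\tau)^k \vt{j(\rho,\tau)}^{2s} P_k(\tau,z,s)$, and substituting $\vt{j(\rho,\tau)}^{2s} = v^s \Im(\rho\tau)^{-s}$ and clearing $\Im(\rho\tau)^s$ to the left gives the first identity.

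For invariance in $z$, the subtlety is that we are not reindexing against a $\tau$-substitution but changing the auxiliary point, so I would instead substitute $\alpha = \rho\beta$ and write $w \coloneqq \beta\tau = \rho^{-1}\alpha\tau$, so that $\alpha\tau = \rho w$. The key tool is the difference identity $\rho z_1 - \rho z_2 = (z_1 - z_2)\,(j(\rho,z_1) j(\rho,z_2))^{-1}$, applied with $z_1 = w$ and $z_2 = z$ (resp.\ $z_2 = \overline{z}$), which rewrites $\alpha\tau - \rho z$ as $(w-z)(j(\rho,z) j(\rho,w))^{-1}$ and similarly $\alpha\tau - \rho\overline{z} = (w-\overline{z})(j(\rho,\overline{z}) j(\rho,w))^{-1}$. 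Together with $j(\alpha,\tau) = j(\rho,w) j(\beta,\tau)$ and $\Im(\rho z)^{s+\frac{k}{2}} = \Im(z)^{s+\frac{k}{2}} \vt{j(\rho,z)}^{-(2s+k)}$, every $\rho$-dependent factor in the summand should reorganize into the summand of $P_k(\tau,z,s)$ indexed by $\beta$.

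I expect the only real obstacle to be the bookkeeping of the half-integer exponents $\frac{k}{2}$ and the $s$-powers: one must split $\vt{j(\rho,z)}^{2s+k}$ and the leftover $\vt{j(\rho,\overline{z})}^{2s}$ into their holomorphic and antiholomorphic parts via $\vt{j(\rho,z)}^2 = j(\rho,z) j(\rho,\overline{z})$ and $\overline{j(\rho,\overline{z})} = j(\rho,z)$, and then check that the $j(\rho,z)^{\frac{k}{2}}$ and $j(\rho,\overline{z})^{\frac{k}{2}}$ produced by the two difference-identity factors, the $j(\rho,w)^{\pm \frac{k}{2}}$ and $\vt{j(\rho,w)}^{\pm 2s}$ coming from the automorphy and difference factors, and all the $s$-powers cancel to leave exactly $1$. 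Because $P_k$ carries the factor $\vt{j(\alpha,\tau)}^{2s}$ explicitly, rather than hidden inside an $\Im(\alpha\tau)^s$ as in $E_k$, I would take care that these split factors recombine in the same pattern as in the $E_k$ computation.
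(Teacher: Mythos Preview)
Your proposal is correct and follows exactly the route the paper takes: the paper's proof is the single sentence ``This follows by the same argument as in the case of $E_k(\tau,z,s)$,'' and you have spelled out precisely that argument, with the same substitutions $\beta = \alpha\rho$ for the $\tau$-variable and $\alpha = \rho\beta$ for the $z$-variable, and the same cocycle and difference identities. The bookkeeping you flag as the only potential obstacle does indeed close up cleanly, since the extra factor $\vt{j(\alpha,\tau)}^{2s}$ in $P_k$ (versus the $\Im(\alpha\tau)^s$ in $E_k$) contributes a $\vt{j(\rho,w)}^{2s}$ that cancels against the $\vt{j(\rho,w)}^{-2s}$ coming from $\vt{\alpha\tau-\overline{\rho z}}^{2s}$.
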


\begin{proof}
This follows by the same argument as in the case of $E_k\left(\tau, z, s\right)$.
\end{proof}

The analytic continuation of $P_2$ to $s=0$ was established by Petersson \cite{pet44}, and to $\Re{(s)} > -\frac{1}{4}$ by Bringmann, Kane \cite[Theorem 3.1]{brika}. To describe it, let $j(\tau)$ be Klein's modular invariant for $\Gamma$, and $D \coloneqq \frac{1}{2\pi i}\frac{d}{d\tau}$. Then Asai, Kaneko, Ninomiya discovered in \cite[Theorem 3]{askani} the Fourier expansion
\begin{align*}
\frac{D(j)(\tau)}{j(w)-j(\tau)} = \sum_{m \geq 0} j_m(w) q^m, \qquad \Im{(\tau)} > \Im{(w)}
\end{align*}
where $j_m(w)$ is the unique element in $\C\left[j(w)\right]$ of the shape
\begin{align*}
j_m(w) = \mathrm{e}^{-2 \pi i m w} + O\left(\mathrm{e}^{2 \pi i w}\right).
\end{align*}
In \cite{brikaloeonro}, the authors proved that the functions $j_m(w)$ form a Hecke system, namely if $T_m$ denotes the normalized Hecke operator, then
\begin{align*}
j_0(w) = 1, \qquad j_1(w) = j(w)-744, \qquad j_m(w) = \left(T_mj_1\right)(w).
\end{align*}
Afterwards, they simplified the expressions from \cite[Theorem 3.1]{brika}, based on earlier work of Duke, Imamo\={g}lu, T\'{o}th \cite[Theorem 5]{duimto11}, and proved that
\begin{align*}
\lim_{s \searrow 0} P_{2}(\tau,z,s) = -2\pi\left(\frac{D(j)(\tau)}{j(z)-j(\tau)}-E_2^*(\tau)\right).
\end{align*}
Matsusaka \cite[Theorem 2.3]{matsu2} extended the latter result to $E_2(\tau,z,s)$, especially
\begin{align*}
\lim_{s \searrow 0} E_{2}(\tau,z,s) = -2\pi\left(\frac{D(j)(\tau)}{j(z)-j(\tau)}-E_2^*(\tau)\right),
\end{align*}
which in turn provides the analytic continuation of $E_{2,\gamma}(\tau,s)$ in the elliptic case. 

\begin{rmk}
Note that the analytic continuation is a polar harmonic Maa{\ss} form of weight $2$ on $\Gamma$ in $\tau$. The poles are located on $\Gamma z$. Such forms satisfy all conditions of an ordinary harmonic Maa{\ss} form, but are permitted to have poles in $\H$. See \cite[Section 13.3]{thebook} for more details.
\end{rmk}

\section{Hyperbolic Eisenstein series} \label{sec:hypeis}
Let $\gamma \in \Gamma$ be hyperbolic. Thus $\Delta(\gamma) > 0$, and $\Delta(\gamma)$ is not a square number. The two fixed points $w_{\gamma}$, $w'_{\gamma}$ of $\gamma$ are real quadratic irrationals, which are Galois conjugate to each other. The geodesic $S_{Q_{\gamma}}$ is an arc in $\H$ connecting $w_{\gamma}$ and $w'_{\gamma}$ (equivalently, the two zeros of $Q_{\gamma}(\tau,1)$), which is perpendicular to $\R$. 

\subsection{Fourier expansion in general}
We suppose in addition that $\gamma$ is primitive throughout, that is the stabilizer $\Gamma_{w_\gamma, w'_{\gamma}}$ is infinite cyclic, and generated by $\gamma$.

\subsubsection{Fourier expansion of $E_{k,\gamma}$}
We appeal to Zagier's method \cite[Section 2]{zagier75}. We recall the double coset decomposition $\Gamma_{w_\gamma, w'_{\gamma}}\backslash\Gamma \slash \Gamma_{\infty}$, and unfold
\begin{align*}
E_{k,\gamma}(\tau, s) = \sum_{\alpha \in \Gamma_{w_\gamma, w'_{\gamma}}\backslash\Gamma \slash \Gamma_{\infty}} \sum_{\beta \in \langle T \rangle} \frac{\sgn{(Q_{(\alpha\beta)^{-1}\gamma (\alpha\beta)})}^{\frac{k}{2}} \Im{(\alpha\beta\tau)}^s} {j(\alpha\beta,\tau)^k Q_{\gamma}(\alpha\beta\tau,1)^{\frac{k}{2}} \vt{Q_{\gamma}(\alpha\beta\tau,1)}^s}.
\end{align*}

We observe that the innermost sum is one-periodic, and hence has a Fourier expansion
\begin{align*}
E_{k,\gamma}(u+iv, s) = \sum_{a \in \Z\setminus\{0\} } \sum_{\substack{b \pmod*{2a} \\ \big[a,b,\frac{b^2-\Delta(\gamma)}{4a}\big] \sim Q_{\gamma}}} \sum_{m \in \Z} c_m(v,s) \mathrm{e}^{2 \pi i m u}
\end{align*}
with coefficients $c_m(v,s)$ given by
\begin{align*}
\int_{-\infty}^{\infty} \frac{v^s \sgn{(a)}^{\frac{k}{2}}\mathrm{e}^{-2\pi i m x}}{\left(a(x+iv)^2+b(x+iv)+\frac{b^2-\Delta(\gamma)}{4a}\right)^{\frac{k}{2}} \vt{a(x+iv)^2+b(x+iv)+\frac{b^2-\Delta(\gamma)}{4a}}^s} dx.
\end{align*}

We abbreviate
\begin{align*}
\lambda \coloneqq \frac{\sqrt{\Delta(\gamma)}}{2\vt{a}} > 0,
\end{align*}
and substitute $x+iv \eqqcolon it - \frac{b}{2a}$. We infer
\begin{align*}
c_m(v,s) &= \frac{-i v^s \mathrm{e}^{2\pi i m \left(\frac{b}{2a}+iv\right)}}{(-1)^{\frac{k}{2}}\vt{a}^{\frac{k}{2}+s}} \int_{v-i\infty}^{v+i\infty} \frac{\mathrm{e}^{2\pi m t}}{\left(t^2+\lambda^2\right)^{\frac{k}{2}} \vt{t^2+\lambda^2}^s} dt.
\end{align*}

Splitting the integral at $v \pm i\lambda$ and majorizing each, we obtain the following result.
\begin{lemma} \label{lem:iconv}
If $\Re{(s)} > 1-\frac{k}{2}$, then the integral expression defining $c_m(v,s)$ converges absolutely.
\end{lemma}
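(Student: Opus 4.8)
The plan is to parametrize the contour, reduce to a one-dimensional real integral, and separate the behaviour at infinity from the harmless behaviour on a compact middle piece. Since the substitution $x+iv = it - \frac{b}{2a}$ moves the real line to the vertical line $\Re(t) = v$, I would write $t = v + i\sigma$ with $\sigma \in \R$, so that $\vt{dt} = d\sigma$. The key simplification is that on this contour the exponential factor has constant modulus: $\vt{\mathrm{e}^{2\pi m t}} = \mathrm{e}^{2\pi m v}$, a finite constant depending only on $m$ and $v$, which therefore plays no role in convergence. Keeping the complex powers straight via $\vt{(t^2+\lambda^2)^{\frac{k}{2}}\vt{t^2+\lambda^2}^s} = \vt{t^2+\lambda^2}^{\frac{k}{2}+\Re(s)}$, absolute convergence is thus equivalent to finiteness of $\int_{\R} \vt{t^2+\lambda^2}^{-\frac{k}{2}-\Re(s)} \, d\sigma$.

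Next I would observe that the integrand has no singularity on the contour. Indeed $t^2 + \lambda^2 = 0$ forces $t = \pm i\lambda$, which lies on the imaginary axis, whereas $\Re(t) = v > 0$ throughout; hence $t^2 + \lambda^2 \neq 0$ and the integrand is continuous along the whole line. Splitting at the two points $v \pm i\lambda$ (that is, at $\sigma = \pm\lambda$), the middle segment $\vt{\sigma} \leq \lambda$ is compact, so the integrand is bounded there and contributes a finite amount.

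For the two tails $\vt{\sigma} \geq \lambda$ I would record an elementary lower bound for $\vt{t^2 + \lambda^2}$. Writing $t^2 + \lambda^2 = (v^2 + \lambda^2 - \sigma^2) + 2iv\sigma$ and using $\vt{z} \geq \vt{\Re(z)}$ gives $\vt{t^2+\lambda^2} \geq \vt{\sigma^2 - v^2 - \lambda^2}$; once $\sigma^2 \geq 2(v^2+\lambda^2)$ the right-hand side is at least $\tfrac12\sigma^2$, while on the bounded remaining range the continuous integrand is simply majorized by its maximum. Consequently the tail integrand is $\ll \vt{\sigma}^{-(k+2\Re(s))}$, and $\int^{\infty} \vt{\sigma}^{-(k+2\Re(s))} \, d\sigma$ converges precisely when $k + 2\Re(s) > 1$. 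The hypothesis $\Re(s) > 1 - \frac{k}{2}$ gives $k + 2\Re(s) > 2 > 1$, so both tails converge and the lemma follows.

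The computation is entirely elementary, and I expect no genuine obstacle. The only points requiring a little care are confirming that the contour avoids the singularities $\pm i\lambda$ because $v>0$, and selecting a clean majorant for $\vt{t^2+\lambda^2}$ on the tails. It is worth noting that the stated domain $\Re(s) > 1 - \frac{k}{2}$ sits comfortably inside the true region of convergence $\Re(s) > \frac{1-k}{2}$ of this particular integral; the stronger hypothesis is used only because it is the natural half-plane on which $E_{k,\gamma}(\tau,s)$ itself converges, so that the interchange yielding the Fourier coefficients is justified there.
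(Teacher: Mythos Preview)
Your proof is correct and follows essentially the same approach as the paper, which merely states ``Splitting the integral at $v \pm i\lambda$ and majorizing each, we obtain the following result.'' You have supplied the details the paper omits, and your remark that the integral itself actually converges in the larger half-plane $\Re(s) > \frac{1-k}{2}$ is a nice observation.
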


\subsubsection{Fourier expansion of $\Ec_{k,\gamma}$}
Now, we turn our interest to the Fourier expansion of $\Ec_{k,\gamma}$. We let $d$ be the positive fundamental discriminant dividing $\Delta(\gamma)$, and in addition, we let
\begin{align*}
W_Q(m,a) \coloneqq \sum_{\substack{b \pmod*{2a} \\ \big[a,b,\frac{b^2-\Delta(\gamma)}{4a}\big] \sim Q}} \mathrm{e}^{\pi i m \frac{b}{a}}, \qquad a \in \Z \setminus\{0\}, \quad m \in \Z
\end{align*}
which is a so called quadratic Weyl sum. Now, the additional averaging over $\Qc_{\sim}$ comes in handy.
\begin{lemma} \label{lem:qsymm}
Let $m \in \Z$. Then we have
\begin{align*}
2\sum_{Q \in \Qc\left(\Delta(\gamma)\right)_{\sim}} \chi_d(Q) \sum_{a \geq 1} W_Q(m,a)
&= \sum_{Q \in \Qc\left(\Delta(\gamma)\right)_{\sim}} \chi_d(Q) \sum_{a \in \Z\setminus\{0\}}W_Q(m,a) \\
&= \sum_{Q \in \Qc\left(\Delta(\gamma)\right)_{\sim}} \chi_d(Q) \sum_{a \in \Z\setminus\{0\}} W_Q(-m,a).
\end{align*}
\end{lemma}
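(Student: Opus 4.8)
The plan is to deduce both identities from two reflection symmetries of the Weyl sums $W_Q(m,a)$, transported to the character-weighted class sum by the bijections $Q \mapsto -Q$ and $Q \mapsto \overline{Q}$ on $\Qc(\Delta(\gamma))_{\sim}$, where $\overline{Q}$ denotes the class of $[a,-b,c]$ for a representative $Q = [a,b,c]$. The two form-level identities I would establish first are
\begin{align*}
W_Q(m,-a) = W_{-Q}(m,a), \qquad W_Q(-m,a) = W_{\overline{Q}}(m,a) \qquad (a \geq 1).
\end{align*}
Both arise from the substitution $b \mapsto -b$ on residues modulo $2a$, which is a bijection leaving $\tfrac{b^2-\Delta(\gamma)}{4a}$ unchanged while conjugating the summand $\mathrm{e}^{\pi i m b/a}$. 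For the first I would use the identity $[-a,b,\tfrac{b^2-\Delta(\gamma)}{-4a}] = -[a,-b,\tfrac{b^2-\Delta(\gamma)}{4a}]$, so that the defining congruence for $W_Q(m,-a)$ turns into $[a,-b,\tfrac{b^2-\Delta(\gamma)}{4a}] \sim -Q$. For the second I would use that $[a,b,c] \mapsto [a,-b,c]$ is a well-defined involution on $\Qc(\Delta(\gamma))_{\sim}$ (it is the conjugation of the $\Gamma$-action by $\mathrm{diag}(1,-1) \in \mathrm{GL}_2(\Z)$), so that the congruence becomes $[a,b,\tfrac{b^2-\Delta(\gamma)}{4a}] \sim \overline{Q}$.

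For the first equality I would split $\sum_{a \in \Z\setminus\{0\}}$ into the ranges $a \geq 1$ and $a \leq -1$, write $a = -a'$ in the latter, and apply $W_Q(m,-a') = W_{-Q}(m,a')$. Re-indexing the outer class sum by $Q \mapsto -Q$ then converts $W_{-Q}$ into $W_Q$ at the cost of replacing $\chi_d(Q)$ by $\chi_d(-Q)$. Since $-Q = [-a,-b,-c]$ represents $-n$ whenever $Q$ represents $n$, and since the hypothesis that $d$ is the \emph{positive} fundamental discriminant forces $\big(\tfrac{d}{-1}\big) = 1$, one obtains $\chi_d(-Q) = \big(\tfrac{d}{-1}\big)\chi_d(Q) = \chi_d(Q)$. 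Hence the $a \leq -1$ contribution equals the $a \geq 1$ contribution after pairing against $\chi_d$, which produces the factor $2$.

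For the second equality I would apply $W_Q(-m,a) = W_{\overline{Q}}(m,a)$ termwise, re-index the class sum by the involution $Q \mapsto \overline{Q}$, and invoke $\chi_d(\overline{Q}) = \chi_d(Q)$; the latter holds because $\overline{Q} = [a,-b,c]$ represents exactly the same integers as $Q = [a,b,c]$ (indeed $[a,-b,c](x,y) = Q(x,-y)$), so the Kronecker symbol defining $\chi_d$ is left unchanged. The main obstacle is precisely the bookkeeping of these class-level statements: I must check that $Q \mapsto -Q$ and $Q \mapsto \overline{Q}$ genuinely descend to bijections of $\Qc(\Delta(\gamma))_{\sim}$ rather than acting only on forms, and that $\chi_d$ is insensitive to each — using $d > 0$ in the first case and the invariance of the set of represented integers in the second. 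Once these two $\chi_d$-invariances are secured, both displayed equalities follow by the reindexings described above.
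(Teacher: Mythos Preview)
Your argument is correct and follows essentially the same route as the paper's own proof: both deduce the first equality from the substitution $a\mapsto -a$ (together with $b\mapsto -b$) and the reindexing $Q\mapsto -Q$, using $\chi_d(-Q)=\chi_d(Q)$ because $d>0$. For the second equality the paper simply writes ``analogously'', and your explicit use of the involution $Q\mapsto\overline{Q}=[a,-b,c]$ together with $\chi_d(\overline{Q})=\chi_d(Q)$ (same represented integers) is precisely the intended unpacking of that word; the $\mathrm{GL}_2(\Z)$-conjugation justification that $Q\mapsto\overline{Q}$ descends to classes is the right bookkeeping.
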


\begin{proof}
We prove the first equality, and observe that $a \mapsto -a$ yields
\begin{align*}
\sum_{\substack{b \pmod*{-2a} \\ \big[-a,b,\frac{b^2-\Delta(\gamma)}{-4a}\big] \sim Q}} \mathrm{e}^{\pi i m \frac{b}{-a}} \ = \ \sum_{\substack{b \pmod*{2a} \\ \big[-a,b,\frac{b^2-\Delta(\gamma)}{-4a}\big] \sim Q}} \mathrm{e}^{\pi i m \frac{-b}{a}} \ = \ \sum_{\substack{b \pmod*{2a} \\ \big[-a,-b,\frac{b^2-\Delta(\gamma)}{-4a}\big] \sim Q}} \mathrm{e}^{\pi i m \frac{b}{a}}
\end{align*}
by reordering summands. We compute
\begin{align*}
\Delta\left(\left[-a,-b,\frac{b^2-\Delta(\gamma)}{-4a}\right]\right) = (-b)^2 - 4(-a)\frac{b^2-\Delta(\gamma)}{-4a} = \Delta(\gamma).
\end{align*}
Furthermore, we have $\chi_d(-Q) = \sgn{(d)}\chi_d(Q)$. Indeed, suppose $Q$ represents some $n$, and $\gcd{(d,n)} = 1$. Then $-Q$ represents $-n$, and $\gcd{(d,-n)} = 1$. This enables us to write $\left(\frac{d}{-n}\right) = \left(\frac{d}{-1}\right)\left(\frac{d}{n}\right)$, and $\left(\frac{d}{-1}\right) = \sgn(d) = 1$. In other words, changing the sign of $Q$ permutes quadratic forms of discriminant $\Delta(\gamma)$ up to equivalence. The second equality follows analogously.
\end{proof}

We deduce that $\Ec_{k,\gamma}(\tau, s)$ equals ($\lambda = \frac{\sqrt{\Delta(\gamma)}}{2a}$)
\begin{align*}
\frac{-2i v^s}{(-1)^{\frac{k}{2}}} \sum_{m \in \Z}  \sum_{Q \in \Qc\left(\Delta(\gamma)\right)_{\sim}} \chi_d(Q) \sum_{a \geq 1 } \frac{W_Q(m,a)}{a^{\frac{k}{2}+s}} \int_{v-i\infty}^{v+i\infty} \frac{\mathrm{e}^{2\pi m t}}{\left(t^2+\lambda^2\right)^{\frac{k}{2}} \vt{t^2+\lambda^2}^s} dt \ q^m.
\end{align*}

We re-establish convergence.
\begin{lemma} \label{lem:fconv}
Suppose $\Re(s) > 1-\frac{k}{2}$. Then the Fourier expansion of $\Ec_{k,\gamma}(\tau, s)$ converges absolutely.
\end{lemma}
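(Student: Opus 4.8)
The plan is to insert absolute values into the displayed Fourier expansion and bound the resulting nonnegative triple series term by term, invoking Tonelli so that the order of summation may be chosen freely. Writing $g_{a}(y) \coloneqq ((v+iy)^{2}+\lambda^{2})^{-\frac{k}{2}}\vt{(v+iy)^{2}+\lambda^{2}}^{-s}$ with $\lambda = \frac{\sqrt{\Delta(\gamma)}}{2a}$, the substitution $t = v+iy$ in the inner integral shows that $\vt{q^{m}}=e^{-2\pi m v}$ exactly cancels the factor $e^{2\pi m v}$ produced by the contour $\Re(t)=v$, so that the modulus of the $(m,a)$-contribution equals, up to the harmless constant $v^{\Re(s)}$, the quantity $a^{-\frac{k}{2}-\Re(s)}\,\vt{W_{Q}(m,a)}\,\vt{G_{a}(m)}$, where $G_{a}(m) \coloneqq \int_{-\infty}^{\infty} e^{2\pi i m y} g_{a}(y)\,dy$. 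Since the sum over $Q \in \Qc(\Delta(\gamma))_{\sim}$ is finite by Lemma \ref{lem:twistedconv}, it suffices to establish convergence of $\sum_{m \in \Z}\sum_{a \geq 1} a^{-\frac{k}{2}-\Re(s)}\,\vt{W_{Q}(m,a)}\,\vt{G_{a}(m)}$.

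For the Weyl sum I would use the trivial estimate $\vt{W_{Q}(m,a)} \leq r(a)$, where $r(a)$ counts the residues $b \pmod*{2a}$ with $b^{2}\equiv\Delta(\gamma) \pmod*{4a}$; this bound is independent of $m$, and $r$ is dominated by a divisor-type function, so that $\sum_{a\geq 1} r(a)\,a^{-\sigma}$ converges for every $\sigma>1$. Hence, pulling out the finite $Q$-sum and summing over $a$ last, the target reduces to convergence of $\sum_{a\geq 1} r(a)\,a^{-\frac{k}{2}-\Re(s)}$, which holds exactly when $\frac{k}{2}+\Re(s)>1$, i.e.\ when $\Re(s)>1-\frac{k}{2}$ as assumed --- provided the inner quantity $\sum_{m\in\Z}\vt{G_{a}(m)}$ can be bounded by a constant $C(v,s)$ independent of $a$.

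The decay of $G_a(m)$ in $m$ together with its uniformity in $a$ is the heart of the matter, and I expect it to be the main obstacle, because the singularities $t=\pm i\lambda$ of the integrand drift toward the real axis as $a\to\infty$, that is $\lambda\to 0$, while the non-holomorphic factor $\vt{t^{2}+\lambda^{2}}^{s}$ forbids the naive contour shift that would otherwise yield the decay through a Bessel-function representation. The saving feature is that the contour $\Re(t)=v$ stays at distance at least $v$ from these singularities: splitting the range of $y$ as in Lemma \ref{lem:iconv} and using the uniform lower bound $\vt{(v+iy)^{2}+\lambda^{2}} \geq c(v)>0$ (checked by separating $\vt{y}\geq\frac{v}{2}$ from $\vt{y}<\frac{v}{2}$) together with the quadratic growth of this expression for large $\vt{y}$, one verifies that $g_{a}$ is smooth with $\lVert g_{a}\rVert_{L^{1}}$ and $\lVert g_{a}''\rVert_{L^{1}}$ bounded uniformly in $\lambda\in(0,\sqrt{\Delta(\gamma)}/2]$. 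Two integrations by parts then give $\vt{G_{a}(m)} \leq \lVert g_{a}''\rVert_{L^{1}}/(2\pi m)^{2}$ for $m\neq 0$ and $\vt{G_{a}(0)}\leq\lVert g_{a}\rVert_{L^{1}}$, whence $\sum_{m\in\Z}\vt{G_{a}(m)}\leq C(v,s)$ uniformly in $a$.

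Combining the three estimates yields the bound $C(v,s)\sum_{a\geq 1} r(a)\,a^{-\frac{k}{2}-\Re(s)}<\infty$ for $\Re(s)>1-\frac{k}{2}$, which is the assertion. The argument runs parallel to the estimates underlying Lemma \ref{lem:iconv}, the only genuinely new inputs being the $m$-independent divisor bound for the Weyl sum $W_Q(m,a)$, which fixes the sharp abscissa, and the uniform-in-$\lambda$ control of the $L^{1}$-norms of $g_{a}$ and its second derivative.
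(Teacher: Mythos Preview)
Your argument is correct, and it takes a genuinely different route from the paper. The paper first recombines the $Q$-sum with $W_Q(m,a)$ into the Sali\'e sum $T_m(d,d',4a)$ and then invokes the Weil bound $T_m(d,d',4a) \ll \gcd(d',m^2d,4a)^{1/2}(4a)^{\varepsilon}$, together with $\gcd(d',m^2d,4a)\ll 1$, to control the $a$-sum; it then simply appeals to Lemma~\ref{lem:iconv} for the integral and says nothing further about the $m$-sum. You instead use the trivial estimate $\vt{W_Q(m,a)}\leq r(a)\ll a^{\varepsilon}$, which is far more elementary than the Weil bound yet lands on exactly the same abscissa $\Re(s)>1-\tfrac{k}{2}$, and you supply an explicit uniform-in-$a$ decay estimate $\vt{G_a(m)}\ll m^{-2}$ via two integrations by parts, making the convergence of the full triple series transparent. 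Thus your proof is more self-contained (no appeal to deep exponential-sum bounds) and in fact more complete on the $m$-summation; the paper's version is shorter because it outsources the arithmetic saving to Weil and leaves the $m$-sum implicit.
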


Before the proof, we rewrite the Fourier expansion of $\Ec_{k,\gamma}$. To this end, we define $d'$ by $0 < \Delta(\gamma) = dd'$ with $d$ fundamental, and recall the Sali{\'e} sum
\begin{align*}
T_m(d,d',c) \coloneqq \sum_{\substack{b \pmod*{c} \\ b^2 \equiv dd' \pmod*{c}}} \chi_d\left(\left[\frac{c}{4},b,\frac{b^2-dd'}{c}\right]\right) \mathrm{e}^{2\pi i \left(\frac{2mb}{c}\right)}.
\end{align*}
We observe that $T_m(d,d',4a)$ equals
\begin{align*}
2 \sum_{\substack{b \pmod*{2a} \\ b^2 \equiv dd' \pmod*{4a}}} \chi_d\left(\left[a,b,\frac{b^2-dd'}{4a}\right]\right) \mathrm{e}^{\pi i \left(\frac{mb}{a}\right)} = 2\sum_{Q \in \Qc\left(\Delta(\gamma)\right)_{\sim}} \chi_d(Q) W_Q(m,a).
\end{align*}
Thus, we arrive at
\begin{align*}
\Ec_{k,\gamma}(\tau, s) = \frac{-i v^s}{(-1)^{\frac{k}{2}}} \sum_{m \in \Z}  \sum_{a \geq 1 } \frac{T_m(d,d',4a)}{a^{\frac{k}{2}+s}} \int_{v-i\infty}^{v+i\infty} \frac{\mathrm{e}^{2\pi m t}}{\left(t^2+\lambda^2\right)^{\frac{k}{2}} \vt{t^2+\lambda^2}^s} dt \ q^m.
\end{align*}

\begin{proof}[Proof of Lemma \ref{lem:fconv}]
We utilize the Weil bound
\begin{align*}
T_m(d,d',4a) \ll \gcd\left(d',m^2d,4a\right)^{\frac{1}{2}} (4a)^{\varepsilon}
\end{align*}
for any $\varepsilon > 0$, compare \cite[p.\ 1545]{andu} for instance. Hence, we have
\begin{align*}
\vt{\sum_{a \geq 1 } \frac{T_m(d,d',4a)}{a^{\frac{k}{2}+s}}} \ll \sum_{a \geq 1 } \frac{a^{\varepsilon}}{a^{\frac{k}{2}+\Re(s)}}
\end{align*}
by bounding $\gcd\left(d',m^2d,4a\right) \ll 1$ (see \cite[p.\ 965]{duimto11} as well), which converges absolutely for any $s \in \C$ with $\Re(s) > 1-\frac{k}{2}$. We conclude by Lemma \ref{lem:iconv}.
\end{proof}

\subsection{Evaluation at $s=0$}
Let
\begin{align*}
J_{\mu}(x) \coloneqq \left(\frac{x}{2}\right)^{\mu} \sum_{j \geq 0} \frac{(-1)^j}{j! \ \Gamma(\mu + j + 1)}\left(\frac{x}{2}\right)^{2j}
\end{align*}
be the usual $J$-Bessel function. We compute the inverse Laplace transform from above.
\begin{lemma} \label{lem:laplace}
Let $\rho > 0$, $m \in \Z$. Then
\begin{align*}
\frac{1}{2 \pi i}\int_{v-i\infty}^{v+i\infty} \frac{\mathrm{e}^{2\pi m t}}{\left(t^2+\lambda^2\right)^{\rho}} dt = \begin{cases}
\frac{\sqrt{\pi}}{\Gamma(\rho)} \left(\frac{\pi m}{\lambda}\right)^{\rho-\frac{1}{2}} J_{\rho-\frac{1}{2}}(2 \pi \lambda m) & \text{ if } m > 0, \\
0 & \text{ if } m \leq 0.
\end{cases}
\end{align*}
\end{lemma}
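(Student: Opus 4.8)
The plan is to read the contour integral as a Bromwich integral, i.e.\ as an inverse Laplace transform, and to match it against a standard transform pair for the $J$-Bessel function. First I would record that the integrand $\mathrm{e}^{2\pi m t}(t^2+\lambda^2)^{-\rho}$ is, apart from the exponential, holomorphic on $\C$ except for the two branch points $t=\pm i\lambda$ of $(t^2+\lambda^2)^{-\rho}$, both of which lie on the imaginary axis. Since the path of integration is the vertical line $\Re(t)=v>0$, it lies strictly to the right of every singularity, so the integral is a genuine Bromwich contour for the function $p\mapsto (p^2+\lambda^2)^{-\rho}$. Writing $x\coloneqq 2\pi m$, the left-hand side is therefore the inverse Laplace transform $\mathcal{L}^{-1}\big[(p^2+\lambda^2)^{-\rho}\big](x)$.

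For $m>0$ I would invoke the classical Laplace transform pair (found in any standard table)
\begin{align*}
\int_0^{\infty} \mathrm{e}^{-pt}\, t^{\rho-\frac12} J_{\rho-\frac12}(\lambda t)\, dt = \frac{(2\lambda)^{\rho-\frac12}\Gamma(\rho)}{\sqrt{\pi}\,(p^2+\lambda^2)^{\rho}},
\end{align*}
valid for $\Re(p)>0$ and $\rho>0$ (so that the Bessel order $\rho-\tfrac12$ exceeds $-\tfrac12$ and the integral converges). Inverting this identity and evaluating at $x=2\pi m>0$ gives
\begin{align*}
\frac{1}{2\pi i}\int_{v-i\infty}^{v+i\infty} \frac{\mathrm{e}^{2\pi m t}}{(t^2+\lambda^2)^{\rho}}\, dt = \frac{\sqrt{\pi}}{(2\lambda)^{\rho-\frac12}\Gamma(\rho)}\,(2\pi m)^{\rho-\frac12} J_{\rho-\frac12}(2\pi\lambda m),
\end{align*}
and the elementary simplification $(2\pi m)^{\rho-\frac12}/(2\lambda)^{\rho-\frac12}=(\pi m/\lambda)^{\rho-\frac12}$ turns this into the asserted formula.

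For $m\le 0$ I would argue by closing the contour to the right. On the half-plane $\Re(t)>0$ the integrand is holomorphic, so for a semicircle of radius $R$ centred at $v$ and opening to the right the enclosed region contains no singularities. When $m<0$ the factor $\vt{\mathrm{e}^{2\pi m t}}=\mathrm{e}^{2\pi m\Re(t)}$ decays as $\Re(t)\to+\infty$, and together with the algebraic bound $\vt{t^2+\lambda^2}^{-\rho}\ll \vt{t}^{-2\rho}$ this makes the arc contribution vanish as $R\to\infty$ by Jordan's lemma; when $m=0$ the algebraic decay alone suffices provided $\rho>\tfrac12$, since the arc length grows like $R$ while the integrand is $O(R^{-2\rho})$. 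Hence the integral equals the (empty) sum of enclosed residues, namely $0$.

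The routine computations are the table lookup and the constant-chasing; the only genuine points to be careful about are (i) fixing the branch of $(t^2+\lambda^2)^{-\rho}$ so that the contour stays in its domain of holomorphy and the Bromwich/inverse-Laplace identification is legitimate, and (ii) convergence. Convergence of the vertical integral is exactly what Lemma \ref{lem:iconv} supplies in the relevant range $\Re(s)>1-\tfrac{k}{2}$, i.e.\ for $\rho=\tfrac{k}{2}+s$; for the borderline case $m=0$ with $0<\rho\le\tfrac12$ one either restricts to the convergent range and appeals to analytic continuation in $\rho$, or notes that this case does not occur in the subsequent application. I expect (i) to be the main obstacle, since the matching with the standard transform pair is only valid once the branch cut is placed on the segment joining $\pm i\lambda$ (or along rays directed away from the right half-plane), so that the entire contour $\Re(t)=v$ avoids it.
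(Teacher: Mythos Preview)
Your argument is correct and matches the paper's proof essentially line for line: for $m>0$ the paper cites the inverse Laplace transform table entry \cite[eq.\ 29.3.57]{abraste}, which is precisely the transform pair you wrote down, and for $m\le 0$ the paper likewise deforms the contour to the right (citing \cite[p.\ 249]{koh85}) and appeals to Cauchy's theorem. Your write-up is in fact slightly more careful than the paper's, in that you correctly identify $\pm i\lambda$ as branch points rather than poles for non-integral $\rho$, and you flag the convergence issue for $m=0$ with $0<\rho\le\tfrac12$; the paper glosses over both, but neither affects the application since only $\Re(\rho)>1$ is used downstream.
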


\begin{proof}
The case $m > 0$ follows directly by item \cite[eq.\ 29.3.57]{abraste}. If $m \leq 0$, we see that the poles of the integrand are on the imaginary axis, to the left of the contour of integration. Hence, we may deform the contour to the right up to $i\infty$ without including any poles, see \cite[p.\ 249]{koh85}. Since $m \leq 0$, the integrand is holomorphic at $i\infty$ as well, and the claim follows by Cauchy's theorem.
\end{proof}

Next, we invoke the $I$-Bessel function $I_{\mu}(x) \coloneqq i^{-\mu} J_{\mu}(ix)$ to define the auxiliary function
\begin{align*}
\phi_m(y,s) \coloneqq \begin{cases}
y^s & \text{ if } m = 0, \\
2\pi \sqrt{\vt{m}y} \ I_{s-\frac{1}{2}}\left(2\pi \vt{m} y\right) & \text{ if } m \in \Z\setminus\{0\}.
\end{cases}
\end{align*}
Averaging this function gives rise to the Niebur Poincar\'{e} series \cite{neu73, nie73}
\begin{align*}
G_m(\tau,s) \coloneqq \sum_{\alpha \in \bsfrac{\Gamma}{\Gamma_{\infty}}} \phi_m\left(\Im{(\alpha \tau)},s\right) \mathrm{e}^{2\pi i m \Re{(\alpha \tau)}}, \quad \Re(s) > 1.
\end{align*}
The analytic properties of $G_m$ can be easily derived from its Fourier expansion, see \cite[pp.\ 969-970]{duimto11} for instance. In particular, $G_m(\cdot,s)$ is invariant under the action of $\Gamma$ on $\H$. 
Moreover, recall the notation $\Gamma_Q$ for the stabilizer of the two zeros of $Q$. The main ingredient is the following result due to Duke, Imamo\={g}lu, T\'{o}th.

\begin{lemma}[\protect{\cite[Proposititon 4]{duimto11}}\footnote{See also \cite[Lemma 4]{duimto16}, \cite[eq.\ (2-7)]{andu}}.] \label{lem:duimto}
Let $\Re(\rho) > 1$, $m \in \Z$, $\Delta(\gamma) = dd' > 0$ with $d > 0$ fundamental, and $d \neq d'$. Then, we have
\begin{align*}
&\frac{\Gamma(\rho)}{2^\rho\Gamma\left(\frac{\rho}{2}\right)^2}\sum_{Q \in \Qc\left(\Delta(\gamma)\right)_{\sim}} \chi_d(Q) \int_{\bsfrac{S_Q}{\Gamma_Q}} G_m(w,\rho) \frac{\vt{dw}}{\Im{(w)}} \\
&= \begin{cases}
\sqrt{2}\pi\vt{m}^{\frac{1}{2}}\Delta(\gamma)^{\frac{1}{4}} \sum_{0 < c \equiv 0 \pmod*{4}} \frac{T_m(d,d',c)}{c^{\frac{1}{2}}}J_{\rho-\frac{1}{2}}\left(\frac{4\pi\sqrt{m^2\Delta(\gamma)}}{c}\right) & \text{ if } m \neq 0, \\
2^{\rho-1} \Delta(\gamma)^{\frac{\rho}{2}} \sum_{0 < c \equiv 0 \pmod*{4}} \frac{T_0(d,d',c)}{c^{\rho}} & \text{ if } m = 0.
\end{cases}
\end{align*}
\end{lemma}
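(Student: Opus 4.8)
The plan is to reconstruct the statement, which is cited from Duke, Imamo\=glu, T\'oth, by the same unfolding argument of Zagier \cite[Section 2]{zagier75} that underlies our Fourier expansion of $E_{k,\gamma}$. First I would insert $G_m(w,\rho)=\sum_{\alpha\in\bsfrac{\Gamma}{\Gamma_{\infty}}}\phi_m(\Im(\alpha w),\rho)\mathrm{e}^{2\pi i m\Re(\alpha w)}$ into the twisted sum of cycle integrals and unfold. Since $\chi_d$ descends to $\Qc(\Delta(\gamma))_{\sim}$ and $G_m(\cdot,\rho)$ is $\Gamma$-invariant, the integrand is a class function; working in the range $\Re(\rho)>1$ where $G_m$ converges absolutely, I would interchange summation and integration and trade the reduction modulo $\Gamma_Q$ in the cycle integral against the summation over $\alpha\in\bsfrac{\Gamma}{\Gamma_{\infty}}$. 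Because $T$ sends $[a,b,c]$ to $[a,b+2a,\ast]$, the surviving terms should biject with $\Gamma_{\infty}$-orbits of forms of discriminant $\Delta(\gamma)$, indexed exactly by $a$ and $b\pmod*{2a}$ as in $W_Q(m,a)$, producing
\begin{align*}
&\sum_{Q \in \Qc(\Delta(\gamma))_{\sim}} \chi_d(Q) \int_{\bsfrac{S_Q}{\Gamma_Q}} G_m(w,\rho) \frac{\vt{dw}}{\Im(w)} \\
&\qquad = \sum_{a \in \Z \setminus \{0\}} \sum_{\substack{b \pmod*{2a} \\ b^2 \equiv \Delta(\gamma) \pmod*{4a}}} \chi_d\left(\left[a,b,\tfrac{b^2-\Delta(\gamma)}{4a}\right]\right) \int_{S_{[a,b,c]}} \phi_m(\Im(w),\rho) \, \mathrm{e}^{2\pi i m \Re(w)} \frac{\vt{dw}}{\Im(w)},
\end{align*}
where $c=\tfrac{b^2-\Delta(\gamma)}{4a}$ and each integral now runs over a single complete geodesic.

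Second, I would parametrize $S_{[a,b,c]}$, a semicircle with center $-\tfrac{b}{2a}$ and radius $\lambda=\tfrac{\sqrt{\Delta(\gamma)}}{2\vt{a}}$, by $w=-\tfrac{b}{2a}+\lambda\mathrm{e}^{i\theta}$ with $\theta\in(0,\pi)$, so that $\tfrac{\vt{dw}}{\Im(w)}=\tfrac{d\theta}{\sin\theta}$ and the exponential factors as $\mathrm{e}^{2\pi i m\Re(w)}=\mathrm{e}^{-\pi i m b/a}\,\mathrm{e}^{2\pi i m\lambda\cos\theta}$. The arithmetic phase $\mathrm{e}^{-\pi i m b/a}$ pulls out of the integral; summing it against $\chi_d([a,b,c])$ over $b\pmod*{2a}$ gives a genus-character-twisted Weyl sum, and via the identity $T_m(d,d',4a)=2\sum_{Q}\chi_d(Q)W_Q(m,a)$ together with the $a\mapsto-a$ symmetry of Lemma \ref{lem:qsymm} the sum over $a$ reassembles into $\sum_{0<c\equiv 0\pmod*{4}}T_m(d,d',c)$ with $c=4a$. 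This is where the various factors of $2$ and the sign of $m$ must be tracked carefully, and where the hypotheses $\Delta(\gamma)=dd'>0$ and $d\neq d'$ guarantee that only genuine non-degenerate geodesics with infinite cyclic stabilizers contribute.

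Third, there remains the purely analytic integral $\int_0^\pi\phi_m(\lambda\sin\theta,\rho)\,\mathrm{e}^{2\pi i m\lambda\cos\theta}\,\tfrac{d\theta}{\sin\theta}$. For $m=0$ one has $\phi_0(\lambda\sin\theta,\rho)=(\lambda\sin\theta)^\rho$, and the integral reduces to the Beta integral $\lambda^\rho\int_0^\pi\sin^{\rho-1}\theta\,d\theta=\lambda^\rho\tfrac{\sqrt{\pi}\,\Gamma(\rho/2)}{\Gamma((\rho+1)/2)}$; inserting $\lambda^\rho=2^\rho\Delta(\gamma)^{\rho/2}c^{-\rho}$ and the prefactor $\tfrac{\Gamma(\rho)}{2^\rho\Gamma(\rho/2)^2}$, the Legendre duplication formula collapses the Gamma factors to give the claimed $2^{\rho-1}\Delta(\gamma)^{\rho/2}\sum_c T_0(d,d',c)c^{-\rho}$. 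For $m\neq0$ I would substitute $\phi_m(\lambda\sin\theta,\rho)=2\pi\sqrt{\vt{m}\lambda\sin\theta}\,I_{\rho-1/2}(2\pi\vt{m}\lambda\sin\theta)$ and evaluate the resulting integral of an $I$-Bessel function against $\mathrm{e}^{2\pi i m\lambda\cos\theta}$ by a classical Poisson/Gegenbauer-type integral representation; since the argument of the Bessel function and the modulus of the exponent coincide, this collapses to a multiple of $J_{\rho-1/2}(2\pi\vt{m}\lambda)$, and one checks $2\pi\vt{m}\lambda=\tfrac{4\pi\sqrt{m^2\Delta(\gamma)}}{c}$, so the normalization $\tfrac{\Gamma(\rho)}{2^\rho\Gamma(\rho/2)^2}$ is precisely what renders the remaining constants $\sqrt{2}\,\pi\,\vt{m}^{1/2}\Delta(\gamma)^{1/4}c^{-1/2}$ as stated.

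The main obstacle I expect is the rigorous justification of the unfolding in the first step: because the cycle integral is taken modulo the infinite cyclic $\Gamma_Q$ while $G_m$ is summed over $\bsfrac{\Gamma}{\Gamma_{\infty}}$, one must not only establish absolute convergence for $\Re(\rho)>1$ but also verify that the combined $\Gamma_Q$- and $\Gamma_{\infty}$-bookkeeping genuinely bijects the surviving terms with $\Gamma_{\infty}$-orbits of forms $[a,b,c]$ of discriminant $\Delta(\gamma)$, counting each orbit exactly once and respecting the congruence $b^2\equiv dd'\pmod*{c}$ through which the genus character enters. This double-coset reconciliation is the conceptual heart of Zagier's method adapted to the genus-character twist; once it is in place, the arithmetic assembly into Sali\'e sums and the Bessel and Beta evaluations in the later steps are routine.
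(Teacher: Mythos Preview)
The paper does not prove this lemma at all: it is quoted verbatim as a result of Duke, Imamo\={g}lu, T\'{o}th \cite[Proposition 4]{duimto11}, with additional pointers to \cite[Lemma 4]{duimto16} and \cite[eq.\ (2-7)]{andu}, and is used as a black box. So there is no ``paper's own proof'' to compare against.

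That said, your reconstruction is essentially the argument of the cited source. The unfolding of $G_m$ against the cycle integral, the bijection with $\Gamma_{\infty}$-orbits of forms $[a,b,c]$ of discriminant $\Delta(\gamma)$ via the double-coset bookkeeping, the parametrization $w=-\tfrac{b}{2a}+\lambda\mathrm{e}^{i\theta}$, and the subsequent Beta/Bessel evaluations are exactly the ingredients of \cite[Proposition 4]{duimto11}. Your identification of the delicate point is also correct: the absolute convergence for $\Re(\rho)>1$ and the matching of $\Gamma_Q$- versus $\Gamma_{\infty}$-cosets so that each $\Gamma_{\infty}$-orbit of forms appears once is precisely what Duke, Imamo\={g}lu, T\'{o}th justify there (building on Zagier's method and Katok's lemma on representatives). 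The one place to be careful in your sketch is the $m\neq 0$ integral: the evaluation of $\int_0^\pi (\sin\theta)^{-1/2} I_{\rho-1/2}(2\pi\vt{m}\lambda\sin\theta)\,\mathrm{e}^{2\pi i m\lambda\cos\theta}\,d\theta$ as a $J$-Bessel function is not an immediate Gegenbauer identity but requires the specific integral representation used in \cite[p.\ 968]{duimto11}; once that is supplied, the constants do collapse as you describe.
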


\begin{rmk}
Note that Duke, Imamo\={g}lu, T\'{o}th and Matsusaka \cite[p.\ 10]{matsu2} use different notations regarding the cycle integral. This is caused by a different choice of generators of $\Gamma_Q$. Let $Q = \left[a,b,c\right]$ be a given primitive quadratic form, and let $t$, $u \in \N$ be the smallest solutions to Pell's equation $t^2-\Delta(Q)u^2 = 4$. Then, the authors of \cite{duimto11} employed the generator $\eta_Q = \pm \left(\begin{smallmatrix} \frac{t+bu}{2} & cu \\ -au & \frac{t-bu}{2}\end{smallmatrix}\right)$, while Matsusaka  works with $\Gamma_{Q_{\gamma}} = \pm\langle\gamma\rangle$. The associated quadratic form $Q_{\eta_Q}$ to $\eta_Q$ is given by $[-au,-bu,-cu] = -uQ$.
\end{rmk}

Recall that $T_m(d,d',c) = T_{-m}(d,d',c)$ by Lemma \ref{lem:qsymm}. We deduce from Lemma \ref{lem:laplace} and \ref{lem:duimto} that the Fourier coefficients corresponding to $m \neq 0$ are all regular at $s=0$, and vanish for every $m < 0$. To inspect the coefficient corresponding to $m=0$, we separate the cases $k \geq 4$ even and $k=2$.
\subsubsection{The case $k \geq 4$ even}
If $k \geq 4$ is even, then $G_0(\tau,\rho)$ is regular at $\rho = \frac{k}{2}$. Hence, the Fourier coefficient corresponding to $m = 0$ vanishes at $s=0$ by Lemma \ref{lem:laplace}. In other words, $\Ec_{k,\gamma}(\tau,0)$ is holomorphic and vanishes at the cusp. Thus,
\begin{align*}
\Ec_{k,\gamma}(\tau,0) =  \frac{(-1)^{\frac{k}{2}} 2^{\frac{k+3}{2}} \pi^{\frac{k}{2}+1}}{\Delta(\gamma)^{\frac{k-1}{4}}\Gamma\left(\frac{k}{2}\right)} \sum_{m \geq 1} m^{\frac{k-1}{2}} \sum_{a \geq 1} \frac{T_m(d,d',4a)}{2\sqrt{a}} J_{\frac{k-1}{2}}\left(\frac{\pi m \sqrt{\Delta(\gamma)}}{a}\right) \ q^m,
\end{align*}
and by Lemma \ref{lem:duimto}, we ultimately obtain that $\Ec_{k,\gamma}(\tau,0)$ equals
\begin{align*}
\frac{(-1)^{\frac{k}{2}}2\pi^{\frac{k}{2}}}{\Delta(\gamma)^{\frac{k}{4}}\Gamma\left(\frac{k}{4}\right)^2} \sum_{m\geq 1} m^{\frac{k}{2}-1} \sum_{Q \in \Qc\left(\Delta(\gamma)\right)_{\sim}} \chi_d(Q) \int_{\bsfrac{S_Q}{\Gamma_Q}} G_{-m}\left(w,\frac{k}{2}\right) \frac{\vt{dw}}{\Im{(w)}} \ q^m.
\end{align*}
This proves Theorem \ref{thm:var}.

\subsubsection{The case $k=2$}
We first suppose that $m \geq 1$. Then we define for $\Re{(s)} > 1$
\begin{align*}
j_m(\tau,s) \coloneqq G_{-m}(\tau,s) - \frac{2m^{1-s}\sigma_{2s-1}(m)}{\pi^{-s-\frac{1}{2}}\Gamma\left(s+\frac{1}{2}\right)\zeta(2s-1)}G_0(\tau,s),
\end{align*}
which has an analytic continuation up to $\Re{(s)} > \frac{1}{2}$ (cf.\ \cite[p.\ 970]{duimto11}). On one hand, the left hand side specializes at $s = 1$ to (cf.\ \cite[eq. (4.11)]{duimto11})
\begin{align*}
j_m(\tau,1) = j_m(\tau) = q^{-m} + O(q),
\end{align*}
which we encountered during the weight $2$ elliptic case already. On the other hand,
\begin{align*}
\lim_{s \to 1} (s-1)G_0(\tau,s) = \frac{3}{\pi}, \qquad \lim_{s \to 1} (s-1)\zeta(2s-1) = \frac{1}{2},
\end{align*}
from which we infer (see \cite[p.\ 1545]{andu} as well)
\begin{align*}
\lim_{s \to 1} \frac{2m^{1-s}\sigma_{2s-1}(m)}{\pi^{-s-\frac{1}{2}}\Gamma\left(s+\frac{1}{2}\right)\zeta(2s-1)}G_0(\tau,s) = 24\sigma_1(m).
\end{align*}
Combining, we arrive at the Fourier coefficients
\begin{align*}
\frac{-2}{\Delta(\gamma)^{\frac{1}{2}}} \sum_{m \geq 1} \sum_{Q \in \Qc\left(\Delta(\gamma)\right)_{\sim}} \chi_d(Q) \int_{\bsfrac{S_Q}{\Gamma_Q}} \left(j_m(w) + 24\sigma_1(m)\right)\frac{\vt{dw}}{\Im{(w)}} \ q^m.
\end{align*}

Secondly, we consider the case $m=0$, namely the Fourier coefficient
\begin{align*}
i\sum_{a \geq 1} \frac{T_0(d,d',4a)}{a^{s+1}} \int_{v-i\infty}^{v+i\infty} \frac{v^s}{\left(t^2+\lambda^2\right) \vt{t^2+\lambda^2}^{s}} dt.
\end{align*}
By Lemma \ref{lem:duimto}, the pole of
\begin{align*}
\sum_{a \geq 1} \frac{T_0(d,d',4a)}{a^{\rho}} = \frac{2\Gamma\left(\rho\right)}{\Delta(\gamma)^{\frac{\rho}{2}}\Gamma\left(\frac{\rho}{2}\right)^2}\sum_{Q \in \Qc\left(\Delta(\gamma)\right)_{\sim}} \chi_d(Q) \int_{\bsfrac{S_Q}{\Gamma_Q}} G_{0}\left(w,\rho\right) \frac{\vt{dw}}{\Im(w)}.
\end{align*}
at $\rho=1$ is simple, while
\begin{align*}
f(\rho) \coloneqq \int_{v-i\infty}^{v+i\infty} \frac{iv^{\rho-1}}{\left(t^2+\lambda^2\right) \vt{t^2+\lambda^2}^{\rho-1}} dt
\end{align*}
has a zero at $\rho = 1$ by Lemma \ref{lem:laplace}. We perform a Taylor expansion of $f$ around $1$, and note that only the term $(\rho-1)\frac{df}{d\rho}(1)$ survives in the limit $\rho \to 1$. We compute
\begin{align*}
\frac{df}{d\rho}(1) &= i\int_{v-i\infty}^{v+i\infty} \frac{v^{\rho-1}\log\left(\frac{v}{\vt{t^2+\lambda^2}}\right)}{\vt{t^2+\lambda^2}^{\rho-1}\left(t^2+\lambda^2\right)} \Bigg\vert_{\rho=1} dt = i\int_{v-i\infty}^{v+i\infty} \frac{\log(v) - \log \left(\vt{t^2+\lambda^2}\right)}{t^2+\lambda^2} dt \\
&= -i\int_{v-i\infty}^{v+i\infty} \frac{\log \left(\vt{t^2+\lambda^2}\right)}{t^2+\lambda^2} dt = \int_{-\infty}^{\infty} \frac{\log \left(\vt{(v+it)^2+\lambda^2}\right)}{(v+it)^2+\lambda^2} dt.
\end{align*}
We expand the integrand around $\lambda = 0$, which yields
\begin{align*}
\frac{df}{d\rho}(1) &= \left[-\frac{1}{v}\arctan{\left(\frac{t}{v}\right)} + \frac{\log{\left(v^2+t^2\right)}+1}{t-iv}\right]_{-\infty}^{\infty} + O\left(\lambda^2\right) = -\frac{\pi}{v} + O\left(\lambda^2\right).
\end{align*}
Recalling the definition of $\lambda$, we express the error as $O\left(\frac{1}{a^2}\right)$. Thus, the additional sums over $a$ caused by the expansion with respect to $\lambda$ are all regular at $\rho=1$ due to the proof of Lemma \ref{lem:fconv}. Hence, letting $\rho \to 1$ annihilates all error terms. Invoking  Lemma \ref{lem:duimto} and the residue of $G_0(w,\rho)$ at $\rho=1$ once more, we obtain 
\begin{align*}
\frac{-2}{\Delta(\gamma)^{\frac{1}{2}}}\sum_{Q \in \Qc\left(\Delta(\gamma)\right)_{\sim}} \chi_d(Q) \int_{\bsfrac{S_Q}{\Gamma_Q}} \frac{3}{\pi v} \frac{\vt{dw}}{\Im{(w)}}.
\end{align*}

In conclusion, we have shown that the analytic continuation of $\Ec_{2,\gamma}(\tau,s)$ to $s=0$ exists, and indeed equals the shape which Matsusaka conjectured in \cite[eq.\ (2.12)]{matsu2} for an individual hyperbolic Eisenstein series $E_{k,\gamma}(\tau,s)$. This proves Theorem \ref{thm:main}.

\begin{rmk}
In \cite[eq.\ (16)]{duimto10}, Duke, Imamo\={g}lu, T\'{o}th related Parson's Poincar{\'e} series \cite{parson} with the generating function $F(z,Q)$ of cycle integrals of functions $f_{k,m}$, where the functions $f_{k,m}$ generalize the functions $j_m$ to any even weight $k$ (compare \cite[Theorem 1, eq.\ (8)]{duimto10}. Since we realized the Fourier coefficients as cycle integrals of $G_{-m}\left(w,\frac{k}{2}\right)$, there might be a relation between them.
\end{rmk}

\bibliographystyle{amsplain}

\begin{bibsection}
\begin{biblist}
\bib{abraste}{book}{
   author={Abramowitz, M.},
   author={Stegun, I. A.},
   title={Handbook of mathematical functions, with formulas, graphs, and
   mathematical tables},
   publisher={Dover Publications, Inc., New York},
   date={1972},
   pages={xiv+1046},
}

\bib{andu}{article}{
   author={Andersen, N.},
   author={Duke, W.},
   title={Modular invariants for real quadratic fields and Kloosterman sums},
   journal={Algebra Number Theory},
   volume={14},
   date={2020},
   number={6},
   pages={1537--1575},
}

\bib{askani}{article}{
   author={Asai, T.},
   author={Kaneko, M.},
   author={Ninomiya, H.},
   title={Zeros of certain modular functions and an application},
   journal={Comment. Math. Univ. St. Paul.},
   volume={46},
   date={1997},
   number={1},
   pages={93--101},
}

\bib{thebook}{book}{
    AUTHOR = {Bringmann, K.},
    AUTHOR = {Folsom, A.},
    AUTHOR = {Ono, K.},
    AUTHOR = {Rolen, L.},
     TITLE = {Harmonic {M}aass forms and mock modular forms: theory and
              applications},
    SERIES = {American Mathematical Society Colloquium Publications},
    VOLUME = {64},
 PUBLISHER = {American Mathematical Society, Providence, RI},
      YEAR = {2017},
     PAGES = {xv+391},
}

\bib{brika}{article}{
   author={Bringmann, K.},
   author={Kane, B.},
   title={A problem of Petersson about weight 0 meromorphic modular forms},
   journal={Res. Math. Sci.},
   volume={3},
   date={2016},
   pages={Paper No. 24, 31},
}

\bib{brikako}{article}{
   author={Bringmann, K.},
   author={Kane, B.},
   author={Kohnen, W.},
   title={Locally harmonic Maass forms and the kernel of the Shintani lift},
   journal={Int. Math. Res. Not. IMRN},
   date={2015},
   number={11},
   pages={3185--3224},
   issn={1073-7928},
   review={\MR{3373048}},
   doi={10.1093/imrn/rnu024},
}

\bib{brikaloeonro}{article}{
   author={Bringmann, K.},
   author={Kane, B.},
   author={L\"{o}brich, S.},
   author={Ono, K.},
   author={Rolen, L.},
   title={On divisors of modular forms},
   journal={Adv. Math.},
   volume={329},
   date={2018},
   pages={541--554},
}

\bib{the123}{collection}{
   author={Bruinier, J. H.},
   author={van der Geer, G.},
   author={Harder, G.},
   author={Zagier, D.},
   title={The 1-2-3 of modular forms},
   series={Universitext},
   note={Lectures from the Summer School on Modular Forms and their
   Applications held in Nordfjordeid, June 2004;
   Edited by Kristian Ranestad},
   publisher={Springer-Verlag, Berlin},
   date={2008},
   pages={x+266},
}

\bib{duimto10}{article}{
   author={Duke, W.},
   author={Imamo\={g}lu, \"{O}.},
   author={T\'{o}th, \'{A}.},
   title={Rational period functions and cycle integrals},
   journal={Abh. Math. Semin. Univ. Hambg.},
   volume={80},
   date={2010},
   number={2},
   pages={255--264},

}

\bib{duimto11}{article}{
   author={Duke, W.},
   author={Imamo\={g}lu, \"{O}.},
   author={T\'{o}th, \'{A}.},
   title={Cycle integrals of the $j$-function and mock modular forms},
   journal={Ann. of Math. (2)},
   volume={173},
   date={2011},
   number={2},
   pages={947--981},
}

\bib{duimto16}{article}{
   author={Duke, W.},
   author={Imamo\={g}lu, \"{O}.},
   author={T\'{o}th, \'{A}.},
   title={Geometric invariants for real quadratic fields},
   journal={Ann. of Math. (2)},
   volume={184},
   date={2016},
   number={3},
   pages={949--990},
}

\bib{grokoza}{article}{
   author={Gross, B.},
   author={Kohnen, W.},
   author={Zagier, D.},
   title={Heegner points and derivatives of $L$-series. II},
   journal={Math. Ann.},
   volume={278},
   date={1987},
   number={1-4},
   pages={497--562},
}

\bib{hoevel}{thesis}{
   author={Hövel, M.},
   title={Automorphe Formen mit Singularitäten auf dem hyperbolischen Raum},
   type={Ph.D. Thesis},
   organization={TU Darmstadt},
   date={2012},
}

\bib{iwaniec97}{book}{
   author={Iwaniec, H.},
   title={Topics in classical automorphic forms},
   series={Graduate Studies in Mathematics},
   volume={17},
   publisher={American Mathematical Society, Providence, RI},
   date={1997},
   pages={xii+259},
}

\bib{jokrvp10}{article}{
   author={Jorgenson, J.},
   author={Kramer, J.},
   author={von Pippich, A.-M.},
   title={On the spectral expansion of hyperbolic Eisenstein series},
   journal={Math. Ann.},
   volume={346},
   date={2010},
   number={4},
   pages={931--947},
}

\bib{koh85}{article}{
   author={Kohnen, W.},
   title={Fourier coefficients of modular forms of half-integral weight},
   journal={Math. Ann.},
   volume={271},
   date={1985},
   number={2},
   pages={237--268},
}

\bib{matsu2}{webpage}{
 	title={A Hyperbolic Analogue of the Rademacher Symbol},
    author={Matsusaka, T.},
    year={2020},
    url={https://arxiv.org/abs/2003.12354v2},
}

\bib{neu73}{article}{
   author={Neunh\"{o}ffer, H.},
   title={\"{U}ber die analytische Fortsetzung von Poincar\'{e}reihen},
   language={German},
   journal={S.-B. Heidelberger Akad. Wiss. Math.-Natur. Kl.},
   date={1973},
   pages={33--90},
}

\bib{nie73}{article}{
   author={Niebur, D.},
   title={A class of nonanalytic automorphic functions},
   journal={Nagoya Math. J.},
   volume={52},
   date={1973},
   pages={133--145},
}

\bib{parson}{article}{
   author={Parson, L. A.},
   title={Modular integrals and indefinite binary quadratic forms},
   conference={
   title={A tribute to Emil Grosswald: number theory and related
      analysis},
   },
   book={
   series={Contemp. Math.},
   volume={143},
   publisher={Amer. Math. Soc., Providence, RI},
   },
   date={1993},
   pages={513--523},
}

\bib{pet44}{article}{
   author={Petersson, H.},
   title={Ein Summationsverfahren f\"{u}r die Poincar\'{e}schen Reihen von der
   Dimension --2 zu den hyperbolischen Fixpunktepaaren},
   language={German},
   journal={Math. Z.},
   volume={49},
   date={1944},
   pages={441--496},
}

\bib{pet48}{article}{
   author={Petersson, H.},
   title={\"{U}ber den Bereich absoluter Konvergenz der Poincar\'{e}schen Reihen},
   language={German},
   journal={Acta Math.},
   volume={80},
   date={1948},
   pages={23--63},
}

\bib{pi16}{webpage}{
 	title={A Kronecker limit type formula for elliptic Eisenstein series},
    author={von Pippich, A.-M.},
    year={2016},
    url={https://arxiv.org/abs/1604.00811v1},
}

\bib{pischvoe17}{article}{
   author={von Pippich, A.-M.},
   author={Schwagenscheidt, M.},
   author={V\"{o}lz, F.},
   title={Kronecker limit formulas for parabolic, hyperbolic and elliptic
   Eisenstein series via Borcherds products},
   journal={J. Number Theory},
   volume={225},
   date={2021},
   pages={18--58},
}

\bib{roe67}{article}{
   author={Roelcke, W.},
   title={Das Eigenwertproblem der automorphen Formen in der hyperbolischen
   Ebene, II},
   language={German},
   journal={Math. Ann.},
   volume={168},
   date={1967},
   pages={261--324},
}

\bib{schw18}{thesis}{
   author={Schwagenscheidt, M.},
   title={Regularized Theta Lifts of Harmonic Maass Forms},
   type={Ph.D. Thesis},
   organization={TU Darmstadt},
   date={2018},
}

\bib{sel56}{article}{
   author={Selberg, A.},
   title={Harmonic analysis and discontinuous groups in weakly symmetric
   Riemannian spaces with applications to Dirichlet series},
   journal={J. Indian Math. Soc. (N.S.)},
   volume={20},
   date={1956},
   pages={47--87},
}

\bib{zagier75}{article}{
   author={Zagier, D.},
   title={Modular forms associated to real quadratic fields},
   journal={Invent. Math.},
   volume={30},
   date={1975},
   number={1},
   pages={1--46},
}

\bib{zagier81}{book}{
   author={Zagier, D.},
   title={Zetafunktionen und quadratische K\"{o}rper},
   language={German},
   note={Eine Einf\"{u}hrung in die h\"{o}here Zahlentheorie. [An introduction to
   higher number theory];
   Hochschultext. [University Text]},
   publisher={Springer-Verlag, Berlin-New York},
   date={1981},
   pages={viii+144},
}

\end{biblist}
\end{bibsection}

\end{document}